\newcommand{\la}{\langle}
\newcommand{\ra}{\rangle}
\newcommand\op{{\oplus}}
\newcommand{\smallfrac}[2]{\textstyle{\frac{#1}{#2}}  }
\newcommand{\bfZ}{\ensuremath{\mathbf Z} }
\newcommand{\bfB}{\ensuremath{\mathbf B} }
\newcommand{\bfD}{\ensuremath{\mathbf D} }
\newcommand{\bfH}{\ensuremath{\mathbf H} }
\newcommand\boldR{\ensuremath{\mathbb R}}
\newcommand\R{\mathbb R}
\newcommand\boldZ{{\mathbb Z}}
\newcommand{\calB}{\ensuremath{\mathcal{B}} }
\newcommand{\calC}{\ensuremath{\mathcal{C}} }
\newcommand\Fix{\operatorname{Fix}}
\newcommand{\trace}{\operatorname{trace}}
\newcommand{\Id}{\operatorname{Id}}
\newcommand{\myspan}{\operatorname{span}}
\newcommand{\fraka}{\ensuremath{\mathfrak{a}} }
\newcommand{\frakg}{\ensuremath{\mathfrak{g}} }
\newcommand{\frakh}{\ensuremath{\mathfrak{h}} }
\newcommand{\frakk}{\ensuremath{\mathfrak{k}} }
\newcommand{\frakn}{\ensuremath{\mathfrak{n}} }
\newcommand{\frakq}{\ensuremath{\mathfrak{q}} }
\newcommand{\fraks}{\ensuremath{\mathfrak{s}} }
\newcommand\fraksl{{\mathfrak{sl}}}
\renewcommand\t{{\mathfrak{t}}}
\renewcommand\a{{\mathfrak{a}}}
\newcommand\g{{\mathfrak{g}}}
\newcommand\n{{\mathfrak{n}}}
\newcommand\s{{\mathfrak{s}}}
\newcommand\ip{{\langle\cdot {,} \cdot\rangle}}
\newcommand\tr{\operatorname{trace}}
\newcommand\diag{\operatorname{diag}}
\newcommand\ad{\operatorname{ad}}
\newcommand\End{\operatorname{End}}
\newcommand\ric{\operatorname{ric}}
\newcommand{\Ric}{\operatorname{Ric}}
\theoremstyle{plain}
\newtheorem{theorem}{Theorem} 
\newtheorem*{main}{Main Theorem}
\newtheorem*{def-2-1}{Definition 2.1}
\newtheorem{prop}[theorem]{Proposition}
\newtheorem{lemma}[theorem]{Lemma}  
\newtheorem{coro}[theorem]{Corollary}
\theoremstyle{definition} 
\newtheorem{definition}[theorem]{Definition}
\newtheorem{defn}[theorem]{Definition}
\theoremstyle{remark}
\newtheorem{remark}[theorem]{Remark}
\numberwithin{equation}{section}
\numberwithin{theorem}{section}
\begin{document}
\title[Attached Submanifolds Beyond Symmetric Spaces]{Attached Submanifolds Beyond Symmetric Spaces}
\subjclass[2000]{Primary: 53C30; Secondary: 53C40; 53C25; 52C35; 22E25}

\keywords{Submanifold, 
minimal submanifold, 
Ricci curvature,  homogeneous space, solvmanifold,   
attached subalgebra}
\thanks{} 

\author[M. M. Kerr]{Megan M. Kerr \orcidlink{https://orcid.org/0000-0003-2841-1752}}
\address{Department of Mathematics, Wellesley College, 106 Central St.,
Wellesley, MA 02481} \email{mkerr@wellesley.edu}
\author[T. L. Payne]{Tracy L. Payne  \orcidlink{https://orcid.org/0000-0002-0218-3352}}
\address{Department of Mathematics and Statistics,  Idaho State University, 
Pocatello, ID 83209-8085} \email{tracypayne@isu.edu}
\date{\today}

\begin{abstract} We study submanifold geometry in the presence
of symmetry, focusing on  submanifolds of solvmanifolds with an
unusual property relative to Ricci curvature.   We generalize
work  of H. Tamaru  \cite{tamaru-11} in which he explores the
geometry of submanifolds of symmetric spaces of noncompact type
constructed from parabolic subgroups of  the isometry group.  
He calls these {\em attached submanifolds.}
The Ricci curvatures of attached submanifolds coincide with the
restrictions of the Ricci curvatures of  ambient symmetric
spaces. 

We broaden Tamaru's construction by  weakening the hypotheses
on the  ambient space, allowing  a pseudo-Riemannian scalar
product, and defining attached submanifolds in terms of root
spaces.  
 We demonstrate that in this setting, the Ricci curvature
 restriction property for attached submanifolds holds if and
 only if the submanifold satisfies an algebraic criterion that
 we call the Jacobi Star Condition.
Like  attached submanifolds of symmetric spaces, our attached
submanifolds are minimal, and are only totally geodesic under
hypotheses analogous to hypotheses in the symmetric space case.

Finally, we give an example of a solvmanifold that has  an attached submanifold  and is not a symmetric space, 
demonstrating that attached submanifolds are not unique to symmetric spaces. 
\end{abstract}
\maketitle

\section{Introduction}\label{introduction}

\subsection{Context and motivation.}
 In a sequence of papers, Hiroshi 
Tamaru studied submanifolds of symmetric spaces of noncompact type defined by parabolic subalgebras in corresponding semisimple Lie algebras \cite{tamaru-08, tamaru-11, tamaru-11b}. He calls these {\em attached} submanifolds.   The special properties of parabolic subalgebras translate to unique geometric features of the attached submanifolds.

Of particular interest, Tamaru shows  that  the Ricci curvature tensor for the  attached submanifold coincides with the restriction of the Ricci curvature tensor of the  ambient manifold. 
 As a corollary, because symmetric spaces are Einstein,  their attached  submanifolds
 inherit the constant Ricci curvature of the larger space, yielding many new examples of
 Einstein homogeneous spaces.  In fact, these manifolds are  solvmanifolds, and they
 include examples for which the underlying solvable Lie group has higher step. 
 Another notable property is minimality: Tamaru shows that attached submanifolds are
 always minimal. 
In addition, although they are not always totally geodesic, Tamaru presents necessary
and sufficient conditions for them to be so.
In this work,
we ask,  is  the property of restricted ambient Ricci curvature agreeing with the
intrinsic Ricci curvature of a submanifold unique to attached submanifolds of symmetric
spaces, or does it hold more generally?  Under what more general hypotheses  will
Tamaru's approach carry through?  Throughout his analysis, Tamaru uses the  machinery of
semisimple Lie groups, such as the Killing form, root spaces and the Weyl group.
We carefully analyze the hypotheses and proofs in Tamaru's work, and we generalize his
results by  broadening the context beyond the symmetric spaces, thus weakening
hypotheses on both the ambient space and the submanifold. We discover necessary
geometric hypotheses that allow us to preserve Tamaru's main results.

After generalizing Tamaru's theorems, we ask whether there are 
solvable metric Lie algebras that meet our more general hypotheses and do not meet 
Tamaru's; i.e., do not correspond to symmetric spaces of noncompact type.  We give an
example showing our hypotheses are truly more general, because they are  satisfied by a
solvmanifold that is not a symmetric space. The example we present  is part of a
naturally defined family.   The computations in our example are 
technical and lengthy.  To work more generally and construct  other 
examples, one should  use the structure of 
Kac-Moody Lie algebras, which we have suppressed in our exposition 
to improve readability.
We were motivated to use Kac-Moody algebras because the finite-dimensional solvable
metric Lie algebras  that they define are of strong Iwasawa type, and crucially,  in a
Kac-Moody algebra, the maps $\ad_X^\ast$ are derivations, and this basic condition 
ensures that the Jacobi Star Condition holds for our examples.  

\subsection{Tamaru's results on attached submanifolds of symmetric spaces of noncompact type}

 Let  $(M,g)$ be a symmetric space of noncompact type  and let  $\frakg$ be the Lie
 algebra of the isometry group of $(M,g).$  Let $\frakq$ be a parabolic subalgebra of
 $\frakg$.  Tamaru defined an  orbit of $\exp(\frakq)$ in $M$, endowed with the induced
 metric,   to be an {\em attached submanifold.}  

 An attached submanifold $(S',g')$ of  $(M,g)$  has special properties relative to the ambient  symmetric space.   The main theorem of \cite{tamaru-11} states that the Ricci  form for $(S',g')$  is the 
restriction of the Ricci form of the ambient space  $(S,g)$.     Because the symmetric space is Einstein,  every attached submanifold  is Einstein.  
Furthermore, all attached submanifolds are minimal.  They are not always totally geodesic, and Tamaru presents necessary and sufficient conditions for the attached submanifold to be totally geodesic. 

\subsection{Main theoretical results}\label{Main results}

We are able to generalize  many 
of the results from Tamaru's paper 
\cite{tamaru-11}, often closely following his proofs.

Throughout this work, we work in the setting of metric Lie algebras. 
A solvable Lie group $S$ endowed with a left-invariant Riemannian or pseudo-Riemannian metric $g$  may be identified with a Lie algebra $(\fraks,\ip)$ equipped with a scalar  product, and all local geometric quantities for the solvmanifold $(S,g)$ may be computed via the metric Lie algebra $(\fraks,\ip).$ 

\subsubsection{Main Theorem}

We consider solvable pseudo-Riemannian metric Lie algebras of strong Iwasawa type $(\fraks,\ip)$ as defined in Definition \ref{defn: Iwasawa}.
In Definition \ref{defn: attached}, we define an {\em attached metric subalgebra}
$(\fraks',\ip')$ defined by a subset $\Lambda'$ of roots coming from a root space
decomposition. 
   We determine in particular when the restriction of
the Ricci curvature for $(\fraks,\ip)$ to $\fraks'$ agrees with the intrinsic Ricci
curvature for $(\fraks',\ip')$. 
\begin{definition}\label{jacobi star} Let $(\fraks,\ip)$ be a solvable pseudo-Riemannian
metric  Lie algebra of strong Iwasawa type, with $\Lambda \subseteq \fraka^\ast$,
and let  $(\s', \ip')$ be an attached subalgebra defined by  $\Lambda'$ as in Definition
\ref{defn: attached}.  
We say that $(\fraks',\ip')$ satisfies the {\em Jacobi Star Condition}  if there is a
compatible orthonormal basis $\{E_j'\} \cup \{E_j^\perp\}$ 
for $\n = \n' \oplus \n_0$ such that
\begin{equation}\label{eq:jacobi-star}
\smallfrac{1}{2} \sum _j \epsilon_j  [(\ad_{E_j^\perp})^{\ast, \frakn} ,
\ad_{E_j^\perp}
]
(X) =
\ad_{ \sum  \epsilon_j (\ad_{E_j^\perp})^{\ast, \fraks} E_j^\perp} (X) \end{equation}
for all $X$ in $\frakn'$.  
\end{definition}
This definition may seem technical.  A slightly more restrictive condition which implies the Jacobi Star Condition is that $\ad_X^\ast$ be a derivation for all $X.$ Indeed, this is the only additional property of symmetric spaces that we use in Corollary \ref{cor: hiroshi},  where we show that parabolic subalgebras of symmetric spaces satisfy the Jacobi Star Condition.  

Our main result is that the restriction of the Ricci
   endomorphism of a solvable metric Lie algebra   $(\fraks,\ip)$ of strong Iwasawa type
   to $\fraks'$ coincides with the Ricci endomorphism for
   $(\fraks',\ip')$ if and only if the Jacobi Star Condition holds.  
   \begin{main}\label{thm: Ricci equal}
   Let $(\fraks = \fraka \oplus \frakn,\ip)$ be a solvable pseudo-Riemannian metric  Lie algebra
   of strong  Iwasawa type. 
   Let $(\fraks' = \fraka' \oplus \frakn', \ip')$ be the attached metric subalgebra defined by a subset $\Lambda'$ of $\fraka^\ast$.
Let $H$ and $H'$ denote the mean curvature vectors for $(\s,\ip)$ and $(\s',\ip')$, respectively.  Let $\Ric^\s$, $\Ric^{\s'}$, $\Ric^\n$, and $\Ric^{\n'}$ denote the Ricci endomorphisms of the corresponding metric Lie algebras. 
   Then the following are equivalent:
   \begin{enumerate}
       \item 
 ${\displaystyle \Ric^{\s}(X) = \Ric^{\s'}(X)}$   for any $X \in \s'$.
\item $\Ric^\n (X) - \Ric^{\n'} (X) =  [H - H', X]$ for any $X \in \n'$.
  \item The Jacobi Star Condition  holds. 
\end{enumerate}
\end{main}

\subsubsection{Minimality}

Furthermore, we show in Theorem \ref{thm: minimal} that attached
submanifolds are minimal submanifolds, and in Proposition \ref{prop: tot-geodesic},
we characterize when they are totally geodesic. These
results extend analogous statements for symmetric spaces of
noncompact type in \cite{tamaru-11}.

\subsection{Solvable Metric Lie algebras defined by  Kac-Moody algebras}
 We will present   an example of a solvable metric Lie algebra 
that meets our hypotheses but does not correspond to a symmetric space.
 This example comes from the affine untwisted Kac-Moody
algebra $\frakg$ defined by $\fraksl_3(\boldR).$ 
The infinite-dimensional Lie algebra $\frakg$ has a triangular decomposition
$\frakg = \frakn_- + \frakh + \frakn_+.$  We take the upper
triangular part $\frakn_+$ and truncate it to obtain a
finite-dimensional nilpotent Lie algebra $\frakn$.  We extend $\frakn$  by an
abelian  subalgebra of $\frakh$ to get a  12-dimensional, 6-step 
solvable Lie algebra $\fraks = \fraka \ltimes \frakn$ of strong Iwasawa type.  We use a natural invariant bilinear form from the Kac-Moody algebra $\frakg$ to define an inner product on $\fraks.$  
 
This   construction, the truncation of a  triangular part of a 
 Kac-Moody algebra,  works  more generally.  By varying the Kac-Moody Lie algebra and varying the ideal used in the truncation, infinitely many examples may be constructed.  We conjecture that in general many of the symmetries of the Kac-Moody Lie algebra will be preserved in the finite-dimensional nilpotent quotient.

\subsection{Overview}   In Section \ref{section: preliminaries} we review the algebraic and geometric properties of  solvmanifolds with a pseudo-Riemannian metric. In Section \ref{section: attached} we define an attached subalgebra 
and we describe its properties. In Section \ref{section: geometric results} we prove our main theorem 
on attached subalgebras. We also prove that attached submanifolds are minimal and prove necessary and sufficient conditions for them to be  totally geodesic. In Section \ref{section: symmetric spaces} we show that Tamaru's attached subalgebras arising from noncompact symmetric spaces are a special case of our construction. Finally, in Section \ref{section: example}, we give an example of a solvmanifold that is not a symmetric space with an attached submanifold.

\section{Preliminaries}\label{section: preliminaries}

\subsection{Solvable metric Lie algebras}\label{sec: solvable}
A {\em (pseudo-Riemannian) metric Lie algebra} is a Lie algebra
endowed with a nondegenerate
symmetric bilinear form, the scalar product $\ip$.
Let $(\fraks,\ip)$ be a solvable pseudo-Riemannian Lie algebra.
A vector $X$ in $\fraks$ is a {\em unit vector} with respect to the
scalar product  if  $\la X,X\ra  \in \{-1,1\}$.
A basis $\{E_i\}$ of pairwise orthogonal  unit vectors is called {\em
  orthonormal}.
For such a basis, we use   $\epsilon_i \in \{-1,1\}$ to denote $\la E_i, E_i \ra$.
We focus on solvable pseudo-Riemannian  Lie algebras that  satisfy the conditions in the following definition.  

\begin{definition}\label{defn: Iwasawa} 
Let $(\fraks, \ip)$ be a  solvable pseudo-Riemannian  metric Lie algebra. 
We say $(\fraks, \ip)$ is of   {\em strong Iwasawa type} if
\begin{enumerate}
\item  $\fraks = \fraka \oplus \frakn$, where  $\n = [\s, \s]$, and $\a$, is the orthogonal complement of $\n$ and is abelian, 
\item for all $A$ in $\fraka,$  $\ad_A$ is symmetric, and $\ad_A$ is zero if and only if $A=0$, 
  \item there exists an $A^0$ in $\fraka$ so that  the restriction of $\ad_{A^0}$ to $\frakn$ has  all positive eigenvalues 
  and
\item the scalar product is positive definite on $\fraka.$
\end{enumerate} 
\end{definition}
The definition of Iwasawa type (parts (i)-(iii)) is standard in the Riemannian case; we require the additional hypothesis (iv) in order for reflections in $\fraka$ to be well-defined in the pseudo-Riemannian setting. 

For any $\alpha$ in $\fraka^\ast,$  the subspace $\frakn_\alpha$ in $\n$ is defined by
\[ \n_{\alpha} = \{X \in \n \mid \ad_A(X) = \alpha(A) X ~{\rm for~all} ~
A\in\a \}.\] 
When $\n_{\alpha} $ is nontrivial, we say $\alpha$ is a {\em root} and $\frakn_\alpha$ 
is the {\em root space} for $\alpha$.  
Let $\Delta \subseteq\fraka^*$ denote the set of roots of 
$\s$.
When $(\fraks, \ip)$ is of
strong Iwasawa type, $0$ is not a root.
By properties (i) and (ii)
of strong Iwasawa type, the root space decomposition  $\n = \bigoplus \n_\alpha$ is an orthogonal direct sum 
and the scalar product is  nondegenerate on each summand.  
For each
root $\alpha$, let $H_\alpha$ in $\fraka$ denote the {\em root
vector}  defined by the property that for every $A$ in $\a$, $\la H_\alpha, A \ra = \alpha(A)$.
For  any basis $\{\alpha_i \}$ of $\fraka^\ast$, define the dual  basis  $\{ B_{\alpha_i}\}$ of $\a$  by the property that 
$\alpha_i(B_{\alpha_j}) = \delta_{ij}$ for all $i$ and $j$.

For a metric Lie algebra $(\frakg, \ip),$ and $X$ in $\frakg$, let
$\ad_X: \frakg \to \frakg$ denote the map  $\ad_X (Y) = [X,Y]$, where
 $Y \in \frakg,$ and let $\ad_X^\ast: \frakg \to \frakg$ denote its
adjoint map.  At times we will consider the adjoint map for a subalgebra
$\frakh$ endowed with the restricted inner product, and we will use
the notation $\ad_X^{\ast,\frakh}: \frakh \to \frakh$ to make  the domain of
the map clear.

  The dual map from $\fraka$ to $\fraka^*$ is defined by $ A \mapsto \check{A}$ where 
  $\check{A} (B) = \la A , B \ra$. 
The scalar product $\ip$ on $\fraka$ induces a scalar product on $\fraka^\ast,$ with $\la  A,B \ra = \la
 \check{A}, \check{B} \ra.$  For each  root
$\beta$, we have the reflection 
$\check{s}_{\beta} : \a^* \to \a^\ast$ defined by
 \[ \check{s}_{\beta} ( \alpha ) = \alpha  - \frac{2\la  \alpha,\beta
     \ra}{\la \beta, \beta  \ra}  \beta, \quad \text{for any $\alpha
     \in\fraka^\ast,$ and}   \]
the root vector $H_\beta$ induces a reflection $s_{\beta}: \fraka \to \fraka$ with
      \[ s_{\beta} (A) = A -  \frac{2 \la A, H_{\beta}\ra}{\la
          H_{\beta}, H_{\beta} \ra }  H_{\beta},\quad  \text{for any $A \in\fraka.$} 
      \]
\subsection{Geometry of  solvable metric Lie algebras}\label{sec: Ricci solv}

In this section we define geometric quantities associated to metric
Lie algebras. The definitions coincide with analogous definitions for
the associated homogeneous spaces.
\begin{defn}\label{defn: mean curvature} 
For a solvable pseudo-Riemannian metric Lie algebra $(\fraks=\fraka \oplus \frakn,\ip)$ 
of strong Iwasawa type, the {\em mean curvature vector} $H$ for $\frakn$ is
the unique vector in $\fraks$ so that for all $X$ in $\s$,
$\langle H,X\rangle =\tr\,\ad_X$. 
\end{defn}  
The Ricci curvature  for a solvable pseudo-Riemannian  Lie
algebra may be computed using the following theorems, each extended 
from the Riemannian setting to the pseudo-Riemannian setting by Conti and Rossi.  

\begin{theorem}\label{Alekseevskii}\cite{alekseevski-75,conti-rossi-19}
  Given a nilpotent pseudo-Riemannian  metric Lie algebra  $(\n, \ip)$  with an orthonormal basis $\{E_i\}$,
the Ricci  endomorphism $\Ric^{\n}$ is given by
 \begin{equation}\label{eqn: ricci-nilpotent}\Ric^{\n} = \tfrac 14 \sum \epsilon_i (\ad_{E_i}) \circ (\ad_{E_i})^* - \tfrac 12 \sum \epsilon_i (\ad_{E_i})^* \circ (\ad_{E_i}).\end{equation}
  \end{theorem}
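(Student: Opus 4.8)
The plan is to carry out the classical Koszul--curvature--trace computation, exploiting nilpotency only at the end to discard the first-order terms. First I would reduce \eqref{eqn: ricci-nilpotent} to the equivalent bilinear identity
\begin{equation*}
\langle\Ric^{\n}X,Y\rangle
= -\tfrac12\sum_{i,j}\epsilon_i\epsilon_j\langle[X,E_i],E_j\rangle\langle[Y,E_i],E_j\rangle
+\tfrac14\sum_{i,j}\epsilon_i\epsilon_j\langle[E_i,E_j],X\rangle\langle[E_i,E_j],Y\rangle,
\qquad X,Y\in\n;
\end{equation*}
the passage between the two forms is a short adjoint manipulation using $\langle\ad_U V,W\rangle=\langle V,\ad_U^\ast W\rangle$ and the pseudo-orthonormal completeness relation $v=\sum_i\epsilon_i\langle v,E_i\rangle E_i$. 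That $\Ric^{\n}$ can legitimately be presented as a $\ip$-symmetric endomorphism follows from the standard symmetries of the Levi--Civita curvature tensor, which hold verbatim for a nondegenerate scalar product, together with nondegeneracy of $\ip$ on $\n$, which holds under strong Iwasawa type.

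To prove the bilinear identity I would start from the Koszul formula for left-invariant fields --- where the derivative terms $X\langle Y,Z\rangle$ all vanish --- obtaining $\nabla_X Y=\tfrac12\bigl([X,Y]-\ad_X^\ast Y-\ad_Y^\ast X\bigr)$. Substituting this into $R(U,V)W=\nabla_U\nabla_V W-\nabla_V\nabla_U W-\nabla_{[U,V]}W$ and forming $\ric(X,Y)=\sum_i\epsilon_i\langle R(E_i,X)Y,E_i\rangle$, I would expand and repeatedly collapse interior sums with the completeness relation, using the Jacobi identity to simplify the terms that are cubic in the bracket. The computation closes because every surviving term carrying a trace of the form $\tr\ad_X$ or $\tr(\ad_X\ad_Y)$ vanishes: the $\ad$-operators of $\n$ are nilpotent, so the mean curvature vector and the Killing form of $(\n,\ip)$ are both zero. (Alternatively, one could quote the general left-invariant pseudo-Riemannian Ricci formula of Conti--Rossi and simply set those two terms to zero; the direct route is more self-contained.)

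The only real obstacle is the sign bookkeeping. In the positive-definite case this is Alekseevskii's identity; in the indefinite case each contraction must be re-examined, since every insertion of the resolution of identity now carries a weight $\epsilon_i$ and each use of $\langle\ad_X^\ast Y,Z\rangle=\langle Y,[X,Z]\rangle$ must be tracked so that the $\epsilon$'s end up exactly as in \eqref{eqn: ricci-nilpotent}. The assertion of the theorem is precisely that, after this accounting, the Riemannian formula persists verbatim with the weights $\epsilon_i$ inserted and with no additional terms --- the verification performed by Conti and Rossi in \cite{conti-rossi-19}, which I would reproduce.
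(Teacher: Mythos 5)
Your outline is correct, and it is essentially the argument behind the statement as the paper uses it: the paper gives no proof of Theorem \ref{Alekseevskii}, quoting it from \cite{alekseevski-75,conti-rossi-19}, and your route --- the Koszul formula $\nabla_XY=\tfrac12([X,Y]-\ad_X^\ast Y-\ad_Y^\ast X)$ for left-invariant fields, the trace of the curvature reduced to the bilinear identity you state (your reduction via the completeness relation $v=\sum_i\epsilon_i\la v,E_i\ra E_i$ checks out), and the observation that nilpotency forces $\tr \ad_X=0$ and $\tr(\ad_X\circ\ad_Y)=0$ so that the mean-curvature and Killing-form terms of the general left-invariant Ricci formula disappear --- is precisely the derivation carried out in those references, with the $\epsilon_i$ weights accounting for the indefinite signature. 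Nothing is missing conceptually; the only deferred content is the routine $\epsilon$-bookkeeping, which is exactly the verification Conti--Rossi perform (and note that nondegeneracy of $\ip$ on $\n$ is part of the definition of a metric Lie algebra here, so no appeal to strong Iwasawa type is needed for that point).
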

 
The Ricci  endomorphism for a solvable  pseudo-Riemannian  Lie
algebra  may be computed using the next theorem, combined with the previous theorem.
 \begin{theorem}\label{thm: Ricci}\cite{wolter-91,
     conti-rossi-22} 
   Let $(\s=\a \oplus \n, \ip)$
   be a solvable pseudo-Riemannian  metric Lie algebra such that  
   $\ad_A$ is symmetric for all $A \in \a$ 
   with mean curvature vector $H$.  Let 
   $ \ric^{\n}$ denote the Ricci curvature of $(\n, \ip|_{\frakn})$. The Ricci curvature of $(\s , \ip)$  satisfies 
 \begin{enumerate}
 \item[(1)] $\ric (A,A') = -\tr(\ad_A \circ \ad_{A'})$ for all $A,A' \in \a$,
\item[(2)]  $\ric (X,A) =0$ for all $A \in \a$ and all $X \in\n$,
 \item[(3)]  $\ric (X,Y) = \ric^{\n} (X,Y) - \langle \ad_{H} X,Y \rangle$ for all $X,Y \in \n$.
 \end{enumerate}
 \end{theorem}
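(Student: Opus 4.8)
The final statement to prove is Theorem \ref{thm: Ricci}, the three Ricci-curvature formulas (1)--(3) for a solvable pseudo-Riemannian metric Lie algebra $(\s = \a \oplus \n, \ip)$ with $\ad_A$ symmetric for all $A \in \a$. The strategy is to compute the Ricci curvature directly from the Levi-Civita connection, using the Koszul formula adapted to the metric Lie algebra setting, and then to exploit the block structure imposed by the splitting $\s = \a \oplus \n$ together with the symmetry of $\ad_A$. First I would record the left-invariant Levi-Civita connection $\nabla_X Y$ via the Koszul formula $2\la \nabla_X Y, Z\ra = \la [X,Y], Z\ra - \la [Y,Z], X\ra + \la [Z,X], Y\ra$, which specializes to $\nabla_X Y = \tfrac12 [X,Y] - \tfrac12 (\ad_X^\ast Y + \ad_Y^\ast X)$. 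Because $\a$ is abelian and $\ad_A$ is symmetric, the connection simplifies considerably on the relevant blocks: for $A, A' \in \a$ one gets $\nabla_A A' = 0$, for $A \in \a$ and $X \in \n$ the terms involving $\ad_A^\ast = \ad_A$ combine cleanly, and for $X, Y \in \n$ the pieces split into an intrinsic $\n$-part plus correction terms from $\a$.

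\textbf{Key steps.} With the connection in hand, I would compute the Ricci tensor $\ric(U,V) = \sum_i \epsilon_i \la R(U, E_i) E_i, V\ra$, where $R(U,V)W = \nabla_U \nabla_V W - \nabla_V \nabla_U W - \nabla_{[U,V]} W$ and $\{E_i\}$ is an orthonormal basis adapted to the splitting $\a \oplus \n$. I would treat the three cases separately. For (1), taking $U = A$, $V = A'$ in $\a$, the curvature only sees the $\n$-directions in the trace (since $\a$ is abelian and flat), and the symmetry of $\ad_A$ collapses the sum into $-\tr(\ad_A \circ \ad_{A'})$; this is essentially the statement that $\a$ acts by a symmetric family whose ``second fundamental form'' contribution is the negative of the squared operator norm. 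For (2), the mixed term $\ric(X,A)$ with $X \in \n$, $A \in \a$ vanishes by a parity/orthogonality argument: the relevant curvature brackets pair a vector in $\n$ against one in $\a$, and the symmetry of $\ad_A$ together with $[\a,\a]=0$ forces cancellation. For (3), the most substantial case, I would isolate the purely nilpotent contribution, which reproduces $\ric^\n(X,Y)$ exactly as computed inside $(\n, \ip|_\n)$, and then collect the remaining terms coming from the $\a$-directions in the trace; these assemble into $-\la \ad_H X, Y\ra$ once $H$ is identified via $\la H, X\ra = \tr \ad_X$, using that $\tr \ad_X$ for $X \in \n$ picks out precisely the mean-curvature correction.

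\textbf{Main obstacle.} The hard part will be the bookkeeping in case (3): correctly separating the Ricci sum over the full orthonormal basis of $\s$ into the intrinsic $\n$-part and the $\a$-part, and verifying that the $\a$-part reorganizes into the single term $\la \ad_H X, Y\ra$ rather than producing extraneous contributions. The subtlety specific to the pseudo-Riemannian setting is that each step carries signs $\epsilon_i$, so I must be careful that the Koszul formula, the adjoint $\ad_X^\ast$, and the trace over the indefinite basis are all handled consistently; in particular the identity $\tr \ad_X = \la H, X\ra$ must be applied with the correct signs so that no $\epsilon_i$ survives spuriously in the final formula. Since Conti and Rossi have established these formulas in the cited references, the role here is expository, and I would either reproduce their sign-careful derivation or, more economically, reduce the pseudo-Riemannian case to the Riemannian computation of Wolter by noting that the Koszul formula and the curvature definition are formally identical once one inserts the signs $\epsilon_i$, so the algebraic identities (1)--(3) hold verbatim.
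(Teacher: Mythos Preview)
The paper does not prove this theorem; it is quoted from the literature (Wolter in the Riemannian case, Conti--Rossi in the pseudo-Riemannian case) and used as a black box. Your outline is a reasonable sketch of the standard derivation, but there is no proof in the paper to compare it against.
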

 
The connection for a metric Lie algebra $(\frakg,\ip)$ decomposes  into skew-symmetric and symmetric parts: $\nabla_XY = -\tfrac 12[X,Y]+U(X,Y),$ where $U$ is the symmetric tensor with 
\begin{equation}\label{eqn: symm} \langle U(X,Y),Z\rangle = \smallfrac{1}{2}  (\langle[Z,X],Y\rangle + \langle [Z,Y],X\rangle)\end{equation} for  $X, Y, Z \in \frakg$.  
 It follows that 
 $ U(X,X)= -{\ad_X}^\ast X$ for any $X$ in $\frakg.$
 
 Let $\frakg'$ be a subalgebra of a metric Lie algebra $(\frakg,\ip),$
 The second fundamental form $h$  
for $\frakg'$ is given by $$h(X,Y) = \nabla_X Y - \nabla'_X Y =
U(X,Y) - U'(X,Y) ,$$ for $X,Y \in \frakg',$ where 
$\nabla'$ denotes the connection of $(\frakg', \ip')$
 and  $U'$ denotes the symmetric part of 
 $\nabla'$.
The subalgebra $\frakg'$  is {\em minimal} if the trace
of $h$ is zero, and it is {\em totally geodesic} if $h$ is
identically zero.

\section{Attached subalgebras and submanifolds}\label{section: attached}

In this section, we define attached subalgebras, a notion due to
Tamaru (\cite{tamaru-08}), and we describe some of their algebraic
and geometric properties.

\subsection{The definition of attached subalgebra}\label{subsection: definitions}
Let $(\fraks=\a \oplus \n,\ip)$ be a solvable pseudo-Riemannian metric  Lie algebra of strong Iwasawa type with
root space decomposition  $\n = \sum_{\alpha \in \Delta} \n_\alpha$.  Let $\Lambda = \{\alpha_i \}$ be a finite subset  of $\Delta$ such that
(i) $\Lambda$ is a basis for $\a^\ast$, and (ii) every element in $\Delta$ can be
expressed as a linear combination of  elements of $\Lambda$ with coefficients in
$\boldZ_{\geq 0}$. (The set $\Lambda$ is analogous to a set of simple 
roots in an abstract root
system.)
Let   $\Lambda'$ be a proper subset of $\Lambda$ and enumerate its complement in
$\Lambda$ as  
\begin{equation}\label{eqn: dual basis prime} \Lambda \setminus \Lambda' = \{\alpha_{i_1}, \alpha_{i_2}, \dots, \alpha_{i_k} \} .\end{equation}
Let $\{B_{\alpha_{i}} \}$ be the dual basis to $\Lambda.$
Define $Z$ in $\fraka$ by 
\begin{equation}\label{eqn: char-vector}
Z = B_{\alpha_{i_1}} + B_{\alpha_{i_2}} + \dots + B_{\alpha_{i_k}}.\end{equation}
Note that $\alpha(Z)> 0$ for all $\alpha$ in $\Lambda \setminus \Lambda^\prime$, $\alpha(Z)=0$ if and only if $\alpha$ is a sum of elements in $\Lambda'$, and $\alpha(Z) \geq 0$ for all $\alpha$ in $\Delta$. 

Assume that (i) for all  $\alpha_j$ in $\Lambda'$, the corresponding  reflection  $\check{s}_{\alpha_j}$   permutes 
the elements of the set of roots $\alpha$ for which 
$\alpha(Z)>0$ and in addition, (ii) if $\alpha(Z)>0$, then
$\dim(\n_\alpha) = \dim(\n_{\check{s}_{\alpha_j}(\alpha)})$.
 The subset $\Lambda'$ defines a metric subalgebra $(\fraks', \ip')$ of  $(\fraks,\ip)$ as follows.
The underlying subalgebra $\fraks'$ is defined by $\fraks' = \a' \oplus \n',$ where 
 \begin{equation}\label{defn: fraka-prime-frakn-prime}
 \fraka' = \myspan_{\R}\{B_{\alpha_{i_j}}  \mid \alpha_{i_j}(Z)>0\} 
 \subseteq \fraka, \quad \text{and} \quad 
 \frakn' = \sum_{\alpha(Z) > 0} \n_\alpha \subseteq \n. 
\end{equation}
The scalar product for  $\fraks'$ is  the restriction of the scalar product for $\s$.

We can write $\fraka$ and $\frakn$ as direct sums $\fraka = \fraka' + \fraka_0$ and $\frakn = \frakn' \oplus \frakn_0$, where complementary
subspaces to $\fraka'$ in $\fraka$ and $\frakn'$ in $\frakn'$ are
defined by
 \[
 \a_0 = 
  \myspan_{\R}\{B_{\alpha_{i_j}} \mid \alpha_{i_j}(Z)=0  \}  \subseteq \a, \quad \, \text{and} \quad
  \n_0 = \sum_{\alpha(Z)=0} \n_{\alpha} \subseteq \n.
\]
We say an orthogonal basis of unit vectors $\{E_j'\} \cup \{E_j^\perp\}$ 
for $\frakn$ is a {\em compatible basis} if each basis vector lies in 
a root space;  $\{E_j'\}$ is a basis for 
$\frakn'$; and $\{E_j^\perp\}$ is a basis for $\frakn_0$.  
Such a basis exists only if the scalar product is nondegenerate on each root space. This holds in our situation because the symmetric operators $\ad_A$ are orthogonally diagonalizable.
\begin{definition}\label{defn: attached} 
For a solvable pseudo-Riemannian metric  Lie algebra of strong Iwasawa 
type $(\s, \ip)$ and $\Lambda'$ as above,  the metric Lie algebra 
$(\s', \ip')$ is  the {\em attached subalgebra defined by $\Lambda'$.}  
Let $(S,g)$ be the simply connected  solvmanifold  associated to 
$(\s, \ip)$.  The submanifold $(S',g')$ defined by $(\s', \ip')$ is 
the {\em attached submanifold defined by $\Lambda'$. } 
\end{definition}
Intuitively, an attached subalgebra is obtained by selecting a 
subset of simple roots and taking  the ideal generated by those root spaces, 
along with a compatible subspace of 
$\fraka.$
\subsection{Properties of attached subalgebras}
First, note that the restricted scalar product on $\fraks'$ is nondegenerate. 
It is nondegenerate on $\fraka'$, as $\ip$ is positive definite  on $\fraka.$ 
Because $\frakn'$ is the sum of orthogonal root spaces on which $\ip$ is nondegenerate,
$\ip$ is nondegenerate on $\frakn'.$  

An attached subalgebra of a solvable  pseudo-Riemannian metric Lie algebra of strong
Iwasawa type is also of strong Iwasawa type.
\begin{prop}\label{prop: alg properties} 
Let $(\fraks,\ip)$ be a solvable pseudo-Riemannian metric  Lie algebra of strong Iwasawa
type, and let  $(\s', \ip')$ be an attached subalgebra defined by $\Lambda'$ as in
Definition \ref{defn: attached}. 
Then
\begin{enumerate}
\item{$\n'$ is an ideal in $\s$,}
\item{$[\s',\s']= \n'$,}
\item{$[\fraka^\prime, \frakn_0] = \{0\}$, }
\item{For any $X \in \n_0$, the mapping $(\ad_X)^{\ast,\fraks}$ sends $\frakn'$ 
into $\n'$.} 
\end{enumerate}
\end{prop}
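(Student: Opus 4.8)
The plan is to argue everything directly from the root space decomposition $\s = \a \oplus \bigoplus_{\alpha \in \Delta} \n_\alpha$, which by Section \ref{section: preliminaries} is orthogonal with $\ip$ nondegenerate on every summand, together with three elementary facts. First, the bracket rules $[A,X] = \alpha(A)X$ for $A \in \a$ and $X \in \n_\alpha$, and $[\n_\alpha, \n_\beta] \subseteq \n_{\alpha+\beta}$, where we set $\n_\gamma := \{0\}$ whenever $\gamma \notin \Delta$. Second, the observations recorded just after \eqref{eqn: char-vector}: $\alpha(Z) \ge 0$ for all $\alpha \in \Delta$, $\alpha(Z) = 0$ exactly when $\alpha$ is a nonnegative integer combination of elements of $\Lambda'$, and $\alpha_{i_j}(Z) = 1$ for each $\alpha_{i_j} \in \Lambda \setminus \Lambda'$, so that in particular $Z \in \a'$. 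Third, the duality relation $\alpha_j(B_{\alpha_{i_l}}) = 0$ whenever $\alpha_j \in \Lambda'$ and $\alpha_{i_l} \in \Lambda \setminus \Lambda'$.

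With these in hand, parts (i)--(iii) are short. For (i), $[\a, \n'] \subseteq \n'$ since every $\n_\alpha$ with $\alpha(Z) > 0$ is $\ad_\a$-invariant, and $[\n_\alpha, \n_\beta] \subseteq \n_{\alpha+\beta}$ with $(\alpha+\beta)(Z) = \alpha(Z) + \beta(Z) > 0$ as soon as $\beta(Z) > 0$, because $\alpha(Z) \ge 0$; so $[\n, \n'] \subseteq \n'$ and $\n'$ is an ideal. For (ii), the inclusion $[\s', \s'] \subseteq \n'$ follows from (i) together with $[\a', \a'] = 0$, and conversely any $X \in \n_\alpha$ with $\alpha(Z) > 0$ satisfies $X = \alpha(Z)^{-1}[Z, X] \in [\a', \n']$ since $Z \in \a'$, giving $\n' \subseteq [\s', \s']$. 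For (iii), take $A \in \a'$, a linear combination of the $B_{\alpha_{i_l}}$ with $\alpha_{i_l} \in \Lambda \setminus \Lambda'$, and $X \in \n_\alpha$ with $\alpha(Z) = 0$; writing $\alpha = \sum_{\alpha_j \in \Lambda'} c_j \alpha_j$ with $c_j \in \boldZ_{\ge 0}$, the duality relation gives $\alpha(A) = 0$, hence $[A,X] = \alpha(A)X = 0$.

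The one part requiring a little care is (iv); it is essentially a bookkeeping argument identifying which root space survives an orthogonality computation. Since $\ip$ is nondegenerate on $\s$, the adjoint $(\ad_X)^{\ast,\s}$ is well defined, and by linearity it suffices to take $X \in \n_\beta$ with $\beta(Z) = 0$ and $Y \in \n_\gamma$ with $\gamma(Z) > 0$ and to show $W := (\ad_X)^{\ast,\s} Y$ lies in $\n'$. I would read off the components of $W$ with respect to the orthogonal decomposition $\s = \a \oplus \bigoplus_\delta \n_\delta$: for $A \in \a$ we get $\langle W, A\rangle = \langle Y, [X,A]\rangle = -\beta(A)\langle Y, X\rangle = 0$ because $\gamma \ne \beta$ forces $\n_\gamma \perp \n_\beta$; and for $V \in \n_\delta$ we get $\langle W, V\rangle = \langle Y, [X,V]\rangle$ with $[X,V] \in \n_{\beta+\delta}$, which is orthogonal to $\n_\gamma$ unless $\beta + \delta = \gamma$. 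Thus $W$ is orthogonal to $\a$ and to every root space except $\n_{\gamma-\beta}$, and nondegeneracy on each summand forces $W \in \n_{\gamma-\beta}$ (interpreted as $\{0\}$ if $\gamma - \beta \notin \Delta$). Since $(\gamma-\beta)(Z) = \gamma(Z) - \beta(Z) = \gamma(Z) > 0$, we conclude $\n_{\gamma-\beta} \subseteq \n'$, which gives (iv). The only things to watch are that $[X,V]$ really lands in $\n_{\beta+\delta}$ and that orthogonality of distinct root spaces and positive definiteness on $\a$ are being invoked legitimately; the reflection hypotheses on the $\check{s}_{\alpha_j}$ and the mean curvature vector are not needed for this proposition.
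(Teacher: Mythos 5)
Your argument is correct and follows essentially the same route as the paper: all four parts are read off from the orthogonal root space decomposition, the grading of roots by their values on $Z$, and (for (iv)) the fact that $(\ad_X)^{\ast,\fraks}$ shifts root spaces by $-\beta$, which you verify by the same orthogonality computation the paper invokes more tersely. The only small divergence is in (ii), where you write $X=\alpha(Z)^{-1}[Z,X]$ with $Z\in\fraka'$ instead of the paper's $[B_\alpha,X]=X$; this is a harmless (indeed slightly cleaner) variant, since $B_\alpha$ is literally defined only for $\alpha\in\Lambda$.
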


\begin{proof}
(i) Because  $\frakn'$ is
a sum of root spaces, $[\a,\n']\subseteq \n'$. To show that
$[\n,\n']\subseteq \n'$, let $X$ be in $\frakn$ and let $Y$ be in
$\frakn'$.  As $\frakn$ and $\frakn'$ are sums of root spaces, we may
assume  that $X$ and $Y$ are in root spaces
$\n_{\alpha}$ and $\n_{\beta}$ respectively, where $\alpha(Z)\geq 0$ and $\beta(Z)>0$. 
 Then $[X,Y] \in [\frakn_\alpha, \frakn_\beta]
\subseteq \frakn_{\alpha+\beta}$.
Since $(\alpha+\beta)(Z)>0$, we know 
$\frakn_{\alpha+\beta} \subseteq \frakn'$. This shows $[\n,\n']\subseteq \n'$.

(ii) Because $\frakn'$ is a
subalgebra,  $[\n',\n']\subseteq \frakn'$. In addition, $\fraka'$ is abelian and as $\frakn'$ is a sum of root spaces, $[\fraka',\frakn'] \subseteq \frakn'$.
Hence $[\s',\s'] \subseteq \frakn'$.
Now we show $\n'$ is contained in the commutator $[\fraks',\fraks']$. Because
$\n' = \oplus_{\alpha(Z)>0} \frakn_\alpha,$ it suffices to show that each root space
$\n_{\alpha}$ with $\alpha(Z) >0$ is in $[\fraks',\fraks']$.  Let
$\alpha$ be a root with $\alpha(Z)>0$ and let $X\in \n_\alpha$.  Then
$[B_\alpha,X] =\alpha(B_\alpha)X =X$, 
and
thus $X \in [\s',\s']$. 
Hence $[\fraks', \fraks'] = \frakn'$. 

(iii) Let $A$ be in $\fraka'$ and let $X$ be an
element of a root space $\n_{\beta}$ in $\n_0$. Because $\beta(Z)=0,$ it follows that $\beta(B_\alpha)=0$ for all $\alpha \in \Lambda \setminus \Lambda'.$  Write  $A =\sum_{\alpha \in \Lambda \setminus \Lambda'} c_{\alpha} B_{\alpha}$. Then 
\[ [A, X] =  \sum_{\alpha \in \Lambda \setminus \Lambda'} c_{\alpha} \, [B_{\alpha}, X ] = \sum_{\alpha \in \Lambda \setminus\Lambda'}  c_{\alpha} \, \beta(B_{\alpha})X = \sum_{\alpha_i \in \Lambda'}  (c_{\alpha} \cdot 0) \, X =0. \]

(iv) Let $X$ be an element of a root space $\n_{\beta}$ contained in $\n_0$. Let $Y$ be in a root space $\n_{\alpha}$ in $\n'$.  By definitions of $\frakn'$ and $\frakn_0$, $\alpha(Z)>0$  and $\beta(Z) = 0$.  
Because $\fraks$ is orthogonally graded by the sum $\fraks = \fraka + \sum_{\alpha \in \Delta} \n_{\alpha}$, 
we know $(\ad_X)^* Y \in \n_{\alpha - \beta}$. 
Because $(\alpha - \beta)(Z)>0$,  the  root space  $\n_{\alpha - \beta}$, and hence $(\ad_X)^* Y$, is contained in $ \n'$.
\end{proof}

\begin{prop}\label{prop: sprime}
Let $(\fraks,\ip)$ be a solvable pseudo-Riemannian metric Lie algebra of strong Iwasawa type, and let  $(\s', \ip')$ be an attached subalgebra defined by 
$\Lambda'$.  
Then  $(\s', \ip')$ is  of strong Iwasawa type.  
\end{prop}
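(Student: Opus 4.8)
The plan is to verify directly the four defining conditions of strong Iwasawa type from Definition \ref{defn: Iwasawa} for $(\s' = \a' \oplus \n', \ip')$, using Proposition \ref{prop: alg properties} together with the fact, noted just above, that $\ip$ restricts nondegenerately to $\s'$ and to $\n'$. Condition (i) is almost immediate: $\n' = [\s', \s']$ is exactly Proposition \ref{prop: alg properties}(ii), and $\a'$ is abelian because it is a subspace of the abelian algebra $\a$. That $\a'$ is the $\ip'$-orthogonal complement of $\n'$ in $\s'$ follows from a dimension count: $\a' \subseteq \a$ and $\n' \subseteq \n$ with $\a \perp \n$ give $\a' \subseteq (\n')^{\perp_{\s'}}$, while nondegeneracy of $\ip$ on $\n'$ forces $\dim (\n')^{\perp_{\s'}} = \dim \s' - \dim \n' = \dim \a'$, so the inclusion is an equality. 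Condition (iv) is trivial, since $\ip$ is positive definite on $\a \supseteq \a'$.

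For condition (ii), fix $A \in \a'$. Since $\ad_A$ is symmetric on $(\s, \ip)$ and leaves the subalgebra $\s'$ invariant, its restriction $\ad_A|_{\s'}$ is symmetric for $\ip'$. For the nondegeneracy clause, suppose $\ad_A|_{\s'} = 0$. Every $\alpha \in \Lambda \setminus \Lambda'$ satisfies $\alpha(Z) = 1 > 0$, hence $\n_\alpha \subseteq \n'$, so $\ad_A|_{\s'} = 0$ forces $\alpha(A) = 0$ for all such $\alpha$. Since $\{B_\alpha : \alpha \in \Lambda \setminus \Lambda'\}$ is a basis of $\a'$ dual to the functionals $\{\alpha : \alpha \in \Lambda \setminus \Lambda'\}$, evaluating these functionals on $A$ recovers the coordinates of $A$, which are therefore all zero; hence $A = 0$.

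Finally, for condition (iii) I would take $A^0 = Z$, the vector from \eqref{eqn: char-vector}, which lies in $\a'$. On any root space $\n_\alpha$ contained in $\n'$ one has $\ad_Z|_{\n_\alpha} = \alpha(Z)\,\Id$ with $\alpha(Z) > 0$, and since $\n'$ is the orthogonal sum of these root spaces, $\ad_Z|_{\n'}$ is diagonalizable with all eigenvalues positive. None of these steps is difficult; the only point requiring a moment's care is the orthogonal-complement identification of $\a'$ in condition (i), which rests entirely on the nondegeneracy of $\ip$ on $\n'$ — already established before the proposition from the orthogonal root-space decomposition.
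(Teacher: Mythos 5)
Your proof is correct and follows essentially the same route as the paper's: checking the four conditions of Definition \ref{defn: Iwasawa} directly, using Proposition \ref{prop: alg properties} for $[\s',\s']=\n'$, the dual-basis evaluation $\alpha(A)=a_\alpha$ for the nondegeneracy clause in (ii), and $A^0=Z\in\a'$ for (iii). Your dimension-count identification of $\a'$ as the full orthogonal complement of $\n'$ in $\s'$ is a slightly more careful rendering of a point the paper treats as immediate, but it is not a different argument.
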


\begin{proof} 
First  we address Property (i).
 Because $(\s, \ip)$ is of strong Iwasawa type, $\fraka$ is abelian
and orthogonal to $\frakn$. It follows that the subalgebra $\fraka'$
is abelian and orthogonal to $\frakn'$. By Proposition \ref{prop: alg properties},
$[\s',\s']= \n'$.  
 Clearly the symmetry of  $\ad_A$  is inherited so the first part of 
 Property (ii) holds. To see  that the second part of (ii) holds, assume that
 $\ad_{A}|_{\frakn'} \equiv 0$ for some $A \in \fraka'$.  
Write $A = \sum_{\alpha \in \Lambda \setminus \Lambda'} a_\alpha B_{\alpha} $. Since $0 = [A, X] =a_{\alpha}X$ for $X \in \frakn_{\alpha}$, we have  $a_\alpha =0$ for all $\alpha \in \Lambda \setminus \Lambda'$. Thus $A=0$.
Property (iii)  holds since  $Z$ is in $\a'$, and by definition,  $\frakn'$  is precisely the sum of positive eigenspaces for $\ad_Z$.
Property   (iv) is  clearly inherited from  $(\s, \ip)$. 
\end{proof}

Following \cite{tamaru-11} we can characterize $\fraka'$ in terms of the reflections in $\Lambda'.$ 
We use the notation established at the start of this section. 

\begin{lemma}\cite[Lemma 3.8]{tamaru-11}\label{lem: root perp}  Let $(\fraks, \ip)$ be a solvable pseudo-Riemannian  metric Lie algebra of strong  Iwasawa type with attached metric subalgebra $(\fraks', \ip')$ defined by the proper subset $\Lambda'$ of $\Lambda$.
Then
$$\a' = \bigcap_{\alpha_k \in \Lambda'} \Fix(s_{\alpha_k}) = \bigcap_{\alpha_k \in \Lambda'} \ker \alpha_k .$$
\end{lemma}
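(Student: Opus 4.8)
The plan is to prove the two claimed equalities $\a' = \bigcap_{\alpha_k \in \Lambda'} \ker \alpha_k$ and $\a' = \bigcap_{\alpha_k \in \Lambda'} \Fix(s_{\alpha_k})$ by establishing the three inclusions that close the cycle: $\a' \subseteq \bigcap_k \ker\alpha_k \subseteq \bigcap_k \Fix(s_{\alpha_k}) \subseteq \a'$. The first inclusion is essentially unwinding definitions. Recall $\a' = \myspan_\R\{B_{\alpha_{i_j}} \mid \alpha_{i_j}(Z) > 0\}$, and $\alpha_{i_j}(Z) > 0$ exactly when $\alpha_{i_j} \in \Lambda \setminus \Lambda'$. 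So for a generator $B_{\alpha_{i_j}}$ of $\a'$ and any $\alpha_k \in \Lambda'$, we have $\alpha_k(B_{\alpha_{i_j}}) = \delta_{k,i_j} = 0$ since $\alpha_k \ne \alpha_{i_j}$ (one lies in $\Lambda'$, the other does not). By linearity this gives $\a' \subseteq \bigcap_{\alpha_k \in \Lambda'}\ker\alpha_k$.

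For the middle inclusion, if $A \in \ker\alpha_k$ then $\la A, H_{\alpha_k}\ra = \alpha_k(A) = 0$, so the reflection formula $s_{\alpha_k}(A) = A - \frac{2\la A, H_{\alpha_k}\ra}{\la H_{\alpha_k}, H_{\alpha_k}\ra} H_{\alpha_k}$ collapses to $s_{\alpha_k}(A) = A$; hence $A \in \Fix(s_{\alpha_k})$. Note this uses that $\la H_{\alpha_k}, H_{\alpha_k}\ra \ne 0$, which holds because the scalar product is positive definite on $\a$ (part (iv) of strong Iwasawa type) and $\alpha_k \ne 0$. For the converse direction within the middle equality — needed to get equality of $\bigcap\ker\alpha_k$ and $\bigcap\Fix(s_{\alpha_k})$ — if $s_{\alpha_k}(A) = A$ then $\frac{2\la A, H_{\alpha_k}\ra}{\la H_{\alpha_k}, H_{\alpha_k}\ra} H_{\alpha_k} = 0$, and since $H_{\alpha_k} \ne 0$ we get $\la A, H_{\alpha_k}\ra = \alpha_k(A) = 0$. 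So in fact $\ker\alpha_k = \Fix(s_{\alpha_k})$ for each individual $k$, which is cleaner; I would state it that way.

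The real content is the last inclusion $\bigcap_{\alpha_k \in \Lambda'}\ker\alpha_k \subseteq \a'$, equivalently (by the above) that $\dim \bigcap_k \ker\alpha_k \le \dim\a' = |\Lambda \setminus \Lambda'| = k$. Since $\Lambda$ is a basis for $\a^\ast$, the linear functionals $\{\alpha_k : \alpha_k \in \Lambda'\}$ are linearly independent, so their common kernel has dimension $\dim\a - |\Lambda'| = |\Lambda| - |\Lambda'| = |\Lambda \setminus \Lambda'|$, which is exactly $\dim\a'$. Combined with $\a' \subseteq \bigcap_k\ker\alpha_k$ from the first inclusion, equality of dimensions forces $\a' = \bigcap_k\ker\alpha_k$. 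The main (very minor) obstacle is just making sure the dimension count is airtight — that $\Lambda$ being a basis of $\a^\ast$ is genuinely available (it is, by the standing hypothesis on $\Lambda$ in Section \ref{subsection: definitions}) and that $\dim\a' = |\Lambda\setminus\Lambda'|$, which is immediate since the $B_{\alpha_{i_j}}$ with $\alpha_{i_j} \in \Lambda\setminus\Lambda'$ are part of a dual basis and hence linearly independent. I would then assemble: $\a' \subseteq \bigcap_k\ker\alpha_k = \bigcap_k\Fix(s_{\alpha_k})$, all three spaces have dimension $|\Lambda\setminus\Lambda'|$, so all three coincide.
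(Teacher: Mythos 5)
Your proposal is correct and follows essentially the same route as the paper: the same dual-basis computation $\alpha_k(B_{\alpha_{i_j}})=\delta_{k,i_j}=0$ plus linearity, and the same dimension count using the linear independence of the roots in $\Lambda'$. The only difference is that you make explicit the pointwise identity $\ker\alpha_k=\Fix(s_{\alpha_k})$ (via the reflection formula and positive definiteness on $\fraka$), which the paper leaves implicit; this is a reasonable clarification, not a different argument.
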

\begin{proof}  
Note that the dimensions of $\fraka'$ and the intersection of kernels is the same, because the roots are independent.  Thus it suffices to show that $\a'$ is contained in $\bigcap_{\alpha_j \in \Lambda'} \Fix(s_{\alpha_j})$.  
The dual basis elements $B_{\alpha_{i_j}}$ with $\alpha_{i_j} \in \Lambda \setminus \Lambda'$ span $\a'$.  
Let  $B_{\alpha_{i_j}}$  be any such basis element. 
Let  $\alpha_k$ be an arbitrary element of $\Lambda'$.   Then   $\alpha_k (B_{\alpha_{i_j}})=\delta_{k,{i_j}} =0$, so that $s_{\alpha_k} (B_{\alpha_{i_j}})=B_{\alpha_{i_j}}$ and $B_{\alpha_{i_j}} \in \Fix(s_{\alpha_k})$. 
Because   $\bigcap_{\alpha_k \in \Lambda'} \Fix(s_{\alpha_k})$ is a subspace, linearity implies that $\a'  \subseteq \bigcap_{\alpha_j \in \Lambda'} \Fix(s_{\alpha_j}).$
\end{proof}

\subsection{The Jacobi Star Condition}

A key observation about the  Jacobi Star Condition defined in Definition \ref{jacobi star} is that it is an intrinsic property of the submanifold.
\begin{lemma}\label{jacobi-star-basis-indep}
    The definition of the Jacobi Star Condition 
is independent of the choice of compatible basis 
$\{E_j^\perp\}$ for $\n_0$. 
\end{lemma}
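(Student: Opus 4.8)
The plan is to view both sides of \eqref{eq:jacobi-star}, for each fixed $X\in\n'$, as the image of the single tensor
\[
\Omega_0 \;:=\; \sum_j \epsilon_j\, E_j^\perp\otimes E_j^\perp \;\in\; \n_0\otimes\n_0
\]
under a fixed linear map, and then to observe that $\Omega_0$ itself does not depend on the chosen basis. Since the vectors $\{E_j'\}$ do not appear in \eqref{eq:jacobi-star} at all, only the $\n_0$-part of a compatible basis of $\n$ enters the condition, so this suffices.

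The first step is the linear-algebra observation that $\Omega_0$ is intrinsic to $(\n_0,\ip|_{\n_0})$. Conceptually, $\Omega_0$ is exactly the element of $\n_0\otimes\n_0$ that corresponds to $\operatorname{Id}_{\n_0}$ under the isomorphism $\End(\n_0)\cong\n_0\otimes\n_0^\ast$ followed by the metric isomorphism $\n_0^\ast\cong\n_0$; since both maps are canonical, $\Omega_0$ is basis-free. Concretely, if $\{E_j^\perp\}$ and $\{\widetilde E_k^\perp\}$ are two orthonormal bases of $\n_0$ with transition matrix $P$ and Gram matrices $\mathcal E=\diag(\epsilon_j)$, $\widetilde{\mathcal E}=\diag(\widetilde\epsilon_k)$, then orthonormality reads $\mathcal E=P\widetilde{\mathcal E}P^{\mathsf T}$, and because $\mathcal E^2=\widetilde{\mathcal E}^2=I$ this rearranges to $P^{\mathsf T}\mathcal E P=\widetilde{\mathcal E}$, from which $\sum_j\epsilon_j E_j^\perp\otimes E_j^\perp=\sum_k\widetilde\epsilon_k\,\widetilde E_k^\perp\otimes\widetilde E_k^\perp$. (This uses only that the two bases are orthonormal, not that they are compatible.)

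The second step is to recognize both sides of \eqref{eq:jacobi-star} as contractions of $\Omega_0$. Fix $X\in\n'$. Since $\n$ is an ideal of $\s$ carrying a nondegenerate scalar product, $U\mapsto(\ad_U)^{\ast,\n}$ is linear from $\n_0$ into $\End(\n)$, and likewise $U\mapsto(\ad_U)^{\ast,\s}$ and $U\mapsto\ad_U$ are linear. Hence the assignments
\[
U\otimes V\;\longmapsto\;\tfrac12\Big((\ad_U)^{\ast,\n}[V,X]-\big[V,(\ad_U)^{\ast,\n}X\big]\Big)
\qquad\text{and}\qquad
U\otimes V\;\longmapsto\;\big[(\ad_U)^{\ast,\s}V,\,X\big]
\]
are bilinear in $(U,V)$ and therefore define linear maps $\n_0\otimes\n_0\to\n$; evaluating them at $\Omega_0$ returns precisely the left- and right-hand sides of \eqref{eq:jacobi-star} as written in the basis $\{E_j^\perp\}$. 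Combining with the first step, the validity of \eqref{eq:jacobi-star} for all $X\in\n'$ is independent of the compatible basis, which is the assertion of the lemma.

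The only thing requiring care — and it is bookkeeping rather than a genuine obstacle — is in the second step: one must check that each side of \eqref{eq:jacobi-star} is genuinely bilinear in the two slots occupied by $E_j^\perp$ (so that $\sum_j\epsilon_j(\cdots)$ really is a contraction of $\Omega_0$), and in doing so keep the two distinct adjoints $(\ad_\cdot)^{\ast,\n}$ and $(\ad_\cdot)^{\ast,\s}$ apart, since only the $\s$-adjoint occurs on the right. Both are linear in their subscript, so the contraction structure goes through unchanged.
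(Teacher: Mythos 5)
Your proof is correct, but it runs along a genuinely different line from the paper's. The paper disposes of the lemma by identifying each side of Equation \eqref{eq:jacobi-star} with an intrinsic geometric quantity: by Theorem \ref{thm: ricci-n-diff} the left-hand side equals $-\Ric^{\n}(X)+\Ric^{\n'}(X)$, and by the mean curvature computations of Section \ref{subsection-mcv} the right-hand side equals $-\ad_{H}(X)+\ad_{H'}(X)$; both expressions are manifestly basis-free, so the condition is too. You instead give a self-contained linear-algebra argument: the tensor $\sum_j \epsilon_j\,E_j^\perp\otimes E_j^\perp$ is the metric dual of $\Id$ on $(\n_0,\ip|_{\n_0})$ and hence independent of the orthonormal basis (your transition-matrix check $P^{\mathsf T}\mathcal{E}P=\widetilde{\mathcal{E}}$ is right), and each side of \eqref{eq:jacobi-star} is the image of this tensor under a fixed bilinear map $\n_0\times\n_0\to\n$, using linearity of $U\mapsto\ad_U$, $U\mapsto(\ad_U)^{\ast,\n}$ and $U\mapsto(\ad_U)^{\ast,\s}$, with the two adjoints correctly kept separate. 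What your route buys: it needs no forward reference to the Ricci difference formula or to the identification $H-H'=-\sum\epsilon_j(\ad_{E_j^\perp})^{\ast,\s}E_j^\perp$, and it actually proves invariance under \emph{any} orthonormal basis of $\n_0$, not only compatible ones. What the paper's route buys: brevity once Theorem \ref{thm: ricci-n-diff} is available, and the conceptual point---used again in the Main Theorem---that the two sides of the Jacobi Star Condition are themselves geometric invariants of the pair $(\s,\s')$.
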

Observe that the left-hand side of Equation \eqref{eq:jacobi-star} 
equals 
$-\Ric^\n (X) + \Ric^{\n'} (X)$, which is basis independent, while the right-hand side  equals $-\ad_H (X) + \ad_{H'} (X)$, which is also independent of choice of compatible basis.

\iffalse
\begin{remark}\label{jacobi-star-basis-indep} The definition of the Jacobi Star Condition 
is independent of the choice of compatible basis 
$\{E_j^\perp\}$ for $\n_0$. 
To see this, we observe that the left-hand side  Equation \eqref{eq:jacobi-star} above equals 
$-\Ric^\n (X) + \Ric^{\n'} (X)$, which is basis independent, while the right-hand side  equals $-\ad_H (X) + \ad_{H'} (X)$, which is also independent of choice of compatible basis.
\end{remark}
\else

\begin{remark}
In the Jacobi Star Condition in Equation \eqref{eq:jacobi-star}, the adjoint linear map $(\ad_{E_j^\perp})^{\ast,\frakn}$ in the left-hand side is with respect to the domain
$\frakn$, while the adjoint linear map $(\ad_{E_j^\perp})^{\ast, \fraks}$ in the right-hand side  is with respect to the domain $\fraks$. 
\end{remark}

\section{Geometric results}\label{section: geometric results}
\subsection{Mean curvature vector}\label{subsection-mcv}
First, we compute  root vectors and mean curvature vectors  
 for $(\fraks,\ip)$ and $(\fraks',\ip')$.
 Let $\{E_j'\} \cup \{ E_j^\perp \}$ be a compatible basis of
$\frakn$.  Let $X$ be a unit vector in a root space $\frakn_\alpha$
with $\epsilon = \la X, X \ra$, and let $A$ be in $\a$.  Then
\[ \la -\epsilon \ad_{X}^* X, A \ra = \la \epsilon X, [A, X] \ra = \epsilon\la X, \alpha(A)X\ra = \alpha(A).\]
  Hence   the root vector for the root $\alpha$ is  
 $  H_\alpha = -\epsilon (\ad_{X})^* X,$ and in 
 particular,     
$H_\alpha = -\epsilon_j (\ad_{E_j})^* E_j$ for any basis vector $E_j$ in the root space $\frakn_\alpha$.

Since $\a$ is abelian,
$$ \tr(\ad_A|_{\s}) = \sum_j\epsilon_j \la [A, E_j], E_j \ra   = \la A, -\sum_j  \epsilon_j (\ad_{E_j})^* E_j\ra .$$  
For each root $\alpha$, summing over an orthonormal basis  for 
$\frakn_\alpha$ gives $-\sum_{E_j \in \frakn_\alpha} \epsilon_j (\ad_{E_j})^* E_j  =  (\dim \,\n_\alpha) H_\alpha.$ By Definition \ref{defn: mean curvature},  the mean curvature vector $H$ for $(\fraks,\ip)$  is
\begin{equation}\label{eqn: H}
 H = -\sum_j \epsilon_j (\ad_{E_j})^\ast E_j
 = \sum_{\alpha\in\Delta} (\dim \n_\alpha) H_\alpha .
\end{equation}
Analogously, the  mean curvature vector $H'$ for $(\fraks',\ip')$ is 
\begin{equation}\label{eqn: H'}
 H' = -\sum_j \epsilon_j (\ad_{E_j^\prime})^\ast E_j^\prime
 = \sum_{\alpha(Z)>0} (\dim \n_\alpha) H_\alpha . \end{equation}
 
In the next proposition, we verify that  the mean curvature vector $H'$  
lies in $\fraka'$.  The proposition and proof are analogous to Part (1) of Proposition 
4.3 of \cite{tamaru-11}.
\begin{prop}\label{prop: mean curvature vectors}
Let $(\s,\ip)$ be a solvable pseudo-Riemannian  metric Lie algebra of strong Iwasawa type, with $\Lambda'$ a set of  roots defining an attached subalgebra   $(\s',\ip')$. 
The mean curvature vector $H'$ for $(\fraks',\ip')$ lies in $\a'$.
\end{prop}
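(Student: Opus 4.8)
The plan is to use the explicit formula \eqref{eqn: H'} for $H'$, namely $H' = \sum_{\alpha(Z)>0} (\dim \n_\alpha) H_\alpha$, and show that each root vector $H_\alpha$ with $\alpha(Z)>0$ already lies in $\fraka'$; then linearity finishes the job. Since $\fraka = \fraka' \oplus \fraka_0$ is an orthogonal decomposition (both are spanned by subsets of the dual basis $\{B_{\alpha_i}\}$, and $\ip$ is positive definite on $\fraka$, though in fact one only needs it nondegenerate there), it suffices to show $\langle H_\alpha, A_0\rangle = 0$ for every $A_0 \in \fraka_0$ when $\alpha(Z) > 0$. Actually, a cleaner route is to use Lemma \ref{lem: root perp}, which identifies $\fraka' = \bigcap_{\alpha_k \in \Lambda'} \ker \alpha_k$. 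So I would instead show that $H' \in \ker \alpha_k$ for each $\alpha_k \in \Lambda'$.

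Here is how I would carry it out. Fix $\alpha_k \in \Lambda'$. I want $\alpha_k(H') = 0$. By \eqref{eqn: H'} and the defining property $\langle H_\alpha, A\rangle = \alpha(A)$ of root vectors, together with the symmetry of the induced scalar product on $\fraka^\ast$, we have $\alpha_k(H_\alpha) = \langle \check{s}{}^{\,-1}\cdots\rangle$ — more directly, $\alpha_k(H_\alpha) = \langle H_{\alpha_k}, H_\alpha\rangle = \langle \alpha_k, \alpha\rangle$ in the induced inner product on $\fraka^\ast$. So $\alpha_k(H') = \sum_{\alpha(Z)>0} (\dim \n_\alpha)\, \langle \alpha_k, \alpha\rangle$. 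Now I invoke the standing hypothesis on $\Lambda'$: the reflection $\check{s}_{\alpha_k}$ permutes the set $\Delta^+_Z := \{\alpha \in \Delta : \alpha(Z)>0\}$, and preserves root-space dimensions along this permutation, i.e. $\dim \n_\alpha = \dim \n_{\check{s}_{\alpha_k}(\alpha)}$. The key algebraic point is that $\check{s}_{\alpha_k}$ is an orthogonal involution of $\fraka^\ast$ negating $\alpha_k$, so $\langle \alpha_k, \check{s}_{\alpha_k}(\alpha)\rangle = \langle \check{s}_{\alpha_k}(\alpha_k), \alpha\rangle = -\langle \alpha_k, \alpha\rangle$. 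Pairing up each $\alpha$ with $\check{s}_{\alpha_k}(\alpha)$ in the sum $\sum_{\alpha \in \Delta^+_Z} (\dim\n_\alpha)\langle\alpha_k,\alpha\rangle$, the contributions cancel: terms with $\alpha \neq \check{s}_{\alpha_k}(\alpha)$ cancel in pairs since the dimension weights agree, and any fixed point $\alpha = \check{s}_{\alpha_k}(\alpha)$ contributes $\langle\alpha_k,\alpha\rangle = 0$ since then $\alpha$ is orthogonal to $\alpha_k$. Hence $\alpha_k(H') = 0$ for every $\alpha_k \in \Lambda'$, so by Lemma \ref{lem: root perp}, $H' \in \bigcap_{\alpha_k \in \Lambda'} \ker\alpha_k = \fraka'$.

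The main obstacle, and the thing to get exactly right, is the bookkeeping in the pairing argument: one must check that $\check{s}_{\alpha_k}$ genuinely restricts to a permutation of $\Delta^+_Z$ (this is exactly hypothesis (i) in the setup preceding Definition \ref{defn: attached}, so it is available), that the involution is orthogonal on $\fraka^\ast$ so the sign flip $\langle \alpha_k, \check{s}_{\alpha_k}(\alpha)\rangle = -\langle \alpha_k,\alpha\rangle$ holds, and that the dimension-matching hypothesis (ii) lets the weighted terms cancel rather than just the unweighted ones. A small care point: when $\alpha_k(Z) = 0$ the root $\alpha_k$ itself need not lie in $\Delta^+_Z$, which is fine — we never need $\alpha_k \in \Delta^+_Z$, only that the reflection permutes that set. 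Everything else is a direct translation of the definitions, so I expect the write-up to be short, essentially mirroring the proof of Part (1) of Proposition 4.3 in \cite{tamaru-11}.
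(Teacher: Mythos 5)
Your proposal is correct and is essentially the paper's own argument in dual form: the paper proves $s_{\alpha_j}(H')=H'$ by using the permutation and dimension-matching hypotheses to reindex the sum $H'=\sum_{\alpha(Z)>0}(\dim\n_\alpha)H_\alpha$ and then invokes Lemma \ref{lem: root perp} via the $\Fix(s_{\alpha_j})$ characterization, while you verify $\alpha_k(H')=0$ via the equivalent $\ker\alpha_k$ characterization using the same two hypotheses together with the sign flip $\la \alpha_k,\check{s}_{\alpha_k}(\alpha)\ra=-\la \alpha_k,\alpha\ra$. The bookkeeping points you flag (orthogonality of the reflection, weighted cancellation, fixed points contributing zero) all check out, so no gap.
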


\begin{proof}
By Lemma \ref{lem: root perp}, in order to show that $H'$ lies in $\a'$,  it suffices to show that for any  $\alpha_j \in \Lambda'$, the corresponding reflection   $s_{\alpha_j} : \a \to \a$  fixes $H'.$ 
From the definition of attached subalgebra,
the corresponding  reflection  $\check{s}_{\alpha_j} : \a^* \to \a^*$  permutes the roots for which $\alpha(Z) >0$.  

Applying $s_{\alpha_j}$ to $H'$ gives
\[ s_{\alpha_j}(H') = s_{\alpha_j} \left(\sum_{\alpha(Z)>0} (\dim\,\n_\alpha) H_\alpha \right) 
= \sum_{\alpha(Z) >0} (\dim\,\n_\alpha) s_{\alpha_j}(H_\alpha). \]
By  Definition \ref{defn: attached},  for all $\alpha$ in the sum, we have that $(\check{s}_{\alpha_j}(\alpha))(Z) >0$ and $\dim(\n_\alpha) = \dim(\n_{\check{s}_{\alpha_j}(\alpha)})$.  
It is straightforward to see that  $s_{\alpha}(H_\beta)=H_{\check{s}_\alpha(\beta)}$. Since the reflection $s_{\alpha_j}$ permutes the terms in the sum, we get 
$$s_{\alpha_j}(H') = \sum_{\alpha(Z)>0} (\dim\,\n_{\alpha}) H_{\check{s}_{\alpha_j}(\alpha)} = 
\sum_{\alpha(Z) >0} (\dim\,\n_{\check{s}_{\alpha_j}(\alpha)}) H_{\check{s}_{\alpha_j}(\alpha)} = H'.$$
This proves that  $H'$ is in $\a'$.  
\end{proof}

\subsection{The restriction of the Ricci endomorphism}

The following theorem is a generalization of Lemma 5.2 in \cite{tamaru-11}. It describes the difference in Ricci curvatures for the ambient metric Lie algebra and an attached subalgebra.
\begin{theorem}\label{thm: ricci-n-diff} 
   Let $(\fraks = \a \oplus \n,\ip)$ be a solvable  pseudo-Riemannian metric Lie algebra
   of strong Iwasawa type. 
   Let $(\fraks'= \fraka' \oplus \frakn', \ip')$ be an attached metric subalgebra defined by a set of roots $\Lambda'$
   as in Definition \ref{defn: attached}.
   Let $\{E_j'\} \cup \{E^\perp_j\}$ be a compatible orthonormal basis for $\n = \n' \oplus \n_0$.
   Then for $X$ in $\n',$  
   \[ \Ric^\n (X)  - \Ric^{\n'} (X)   = {\smallfrac{1}{2}} 
 \sum  \epsilon_j \left(  (\ad_{E_j^\perp}) \circ  (\ad_{E_j^\perp})^* -   (\ad_{E_j^\perp})^* \circ  (\ad_{E_j^\perp}) \right) (X).  
\]
 If in addition the Jacobi Star Condition holds, then
\[ \Ric^{\n} (X) - \Ric^{\n'} (X)  =  \ad_{H-H'}(X). 
\]
\end{theorem}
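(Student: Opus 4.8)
The plan is to deduce the second assertion from the first part of the theorem (already proved) together with the Jacobi Star Condition and the mean curvature formulas \eqref{eqn: H} and \eqref{eqn: H'}. The argument is almost entirely formal; the only point requiring care is keeping track of which metric Lie algebra each adjoint is taken over.

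Observe that, up to sign, the right-hand side of the first part of the theorem is the left-hand side of Equation \eqref{eq:jacobi-star}: since $\ad_{E_j^\perp}\circ(\ad_{E_j^\perp})^{\ast,\n} - (\ad_{E_j^\perp})^{\ast,\n}\circ\ad_{E_j^\perp} = -\bigl[\,(\ad_{E_j^\perp})^{\ast,\n},\,\ad_{E_j^\perp}\,\bigr]$, the first part reads, for $X\in\n'$,
\[
 \Ric^{\n}(X) - \Ric^{\n'}(X) \;=\; -\tfrac12\sum_j \epsilon_j\,\bigl[\,(\ad_{E_j^\perp})^{\ast,\n},\,\ad_{E_j^\perp}\,\bigr](X),
\]
where the adjoints are taken over $\n$, as they come from Theorem \ref{Alekseevskii} applied to the nilpotent metric Lie algebra $\n$. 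By Lemma \ref{jacobi-star-basis-indep} the Jacobi Star Condition holds for the compatible basis $\{E_j'\}\cup\{E_j^\perp\}$ fixed in the statement, so substituting \eqref{eq:jacobi-star} gives
\[
 \Ric^{\n}(X) - \Ric^{\n'}(X) \;=\; -\,\ad_{W}(X), \qquad W := \sum_j \epsilon_j\,(\ad_{E_j^\perp})^{\ast,\s}E_j^\perp ,
\]
where the vector $W$ produced on the right-hand side of \eqref{eq:jacobi-star} is now built from the adjoint over $\s$.

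It remains to identify $-W$ with $H-H'$, and here I would reuse the computation of Section \ref{subsection-mcv}: for any root $\alpha$ and any orthonormal basis $\{E_j\}$ of $\n_\alpha$ one has $-\sum_{E_j\in\n_\alpha}\epsilon_j(\ad_{E_j})^{\ast,\s}E_j = (\dim\n_\alpha)H_\alpha$. The vectors $E_j^\perp$ of a compatible basis of $\n$ form orthonormal bases of precisely the root spaces $\n_\alpha$ with $\alpha(Z)=0$, namely the summands of $\n_0$, so $-W = \sum_{\alpha(Z)=0}(\dim\n_\alpha)H_\alpha$. Since every root satisfies $\alpha(Z)\ge 0$, comparison with \eqref{eqn: H} and \eqref{eqn: H'} gives $-W = \sum_{\alpha\in\Delta}(\dim\n_\alpha)H_\alpha - \sum_{\alpha(Z)>0}(\dim\n_\alpha)H_\alpha = H - H'$. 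Substituting back yields $\Ric^{\n}(X) - \Ric^{\n'}(X) = \ad_{H-H'}(X)$, as claimed.

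I do not anticipate a real obstacle once the first part of the theorem is available: the remaining argument is bookkeeping. The one error-prone point — and the only genuine subtlety here — is the clash between the $\n$-adjoint appearing on the left of \eqref{eq:jacobi-star} (and in the Alekseevskii-type formula) and the $\s$-adjoint producing $W$ on its right; these are reconciled precisely by the identity $H_\alpha = -\epsilon\,(\ad_X)^{\ast}X$ (adjoint over $\s$) from Section \ref{subsection-mcv}. Were one not permitted to invoke the first part, the genuine work would instead lie there — showing, via the root-space grading, that the $E_j'$-contributions to $\Ric^{\n}(X)$ cancel against $\Ric^{\n'}(X)$ — but as the theorem is phrased the second assertion is a short corollary.
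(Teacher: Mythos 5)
Your reduction of the second display to the first is correct, and it is essentially the paper's own argument: you invoke Lemma \ref{jacobi-star-basis-indep} to apply the Jacobi Star Condition to the given compatible basis, and you identify $H-H'=\sum_{\alpha(Z)=0}(\dim\frakn_\alpha)H_\alpha=-\sum_j\epsilon_j(\ad_{E_j^\perp})^{\ast,\fraks}E_j^\perp$ exactly as in Section \ref{subsection-mcv} and the proof of the Main Theorem; your sign bookkeeping (the double flip between the commutator convention in \eqref{eq:jacobi-star} and the stated conclusion) is in fact handled more carefully than the paper's phrase that $\tfrac12(A-C)$ ``is precisely'' the left-hand side of \eqref{eq:jacobi-star}.

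The gap is that you never prove the first display, which is the substantive content of the theorem; you explicitly take it as given, and the closing sentence only gestures at where the work would lie. The paper's proof of that part is a genuine computation: expand $\Ric^{\frakn}(X)$ and $\Ric^{\frakn'}(X)$ via Theorem \ref{Alekseevskii}, using $(\ad_{E_i'})^{\ast,\frakn'}=\pi_{\frakn'}\circ(\ad_{E_i'})^{\ast,\frakn}$, split the sums over $\{E_i'\}$ and $\{E_j^\perp\}$, and decompose $(\ad_{E_i'})^{\ast}X$ into its $\frakn'$ and $\frakn_0$ components. One must then show that the primed cross term $B=\sum_i\epsilon_i\,\ad_{E_i'}\bigl(\pi_{\frakn_0}((\ad_{E_i'})^{\ast}X)\bigr)$ equals $A=\sum_j\epsilon_j\,\ad_{E_j^\perp}\bigl((\ad_{E_j^\perp})^{\ast}X\bigr)$ — via the identity $\langle(\ad_{E_i'})^{\ast}X,E_j^\perp\rangle=-\langle(\ad_{E_j^\perp})^{\ast}X,E_i'\rangle$ and expansion in the orthonormal bases — and that $D=\sum_i\epsilon_i\,\pi_{\frakn_0}\bigl((\ad_{E_i'})^{\ast}\ad_{E_i'}X\bigr)=0$ because $(\ad_{E_i'})^{\ast}\ad_{E_i'}X$ stays in the root space of $X$, hence in $\frakn'$. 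Note in particular that your description of this step as ``the $E_j'$-contributions cancel against $\Ric^{\frakn'}(X)$'' is not what happens: the primed contributions do not simply cancel; the surviving cross term $B$ is converted into the unprimed term $A$, which is exactly what promotes the coefficient $\tfrac14$ in Alekseevskii's formula to the $\tfrac12$ appearing in the first display. Without this part, the proposal establishes only the easy half of the statement.
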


\begin{proof} We  compute  $\Ric^\n (X) - \Ric^{\n'} (X)$ for $X$ in $\n'.$
When we compute  $\Ric^{\n'},$ we will use that the  map $(\ad_{E_i^\prime})^{\ast, \frakn'}:\frakn' \to \frakn'$ 
 with respect to the restricted scalar product on $\frakn'$ is
equal to $\pi_{\n'} \circ (\ad_{E'_i})^{\ast,\frakn}$, where the adjoint map is relative to $(\frakn,\ip).$
By Equation \eqref{eqn: ricci-nilpotent},
\begin{align} \Ric^{\n} (X) &= \tfrac 14 \sum \epsilon_i  (\ad_{E_i}) \circ (\ad_{E_i})^* (X) - \tfrac 12 \sum \epsilon_i (\ad_{E_i})^* \circ  (\ad_{E_i})  (X), \, \text{and}
\notag \\
 \Ric^{\n'} (X) &= \tfrac 14 \sum \epsilon_i  (\ad_{E'_i}) \circ
 \pi_{\frakn^\prime} ((\ad_{E'_i})^* (X)) 
 - \tfrac 12 \sum \epsilon_i \, \pi_{\n'}((\ad_{E'_i})^* \circ  (\ad_{E'_i})  (X)). \notag 
\end{align}
We rewrite $ \Ric^{\n} (X)$ to sum over $\{ E_i^\prime\}$ and $\{E_i^\perp\}$ separately, to get
\begin{align*}  \Ric^{\n} (X) &= \tfrac 14 \left(\sum \epsilon_i  (\ad_{E_i^\perp}) \circ (\ad_{E_i^\perp})^* (X) + 
\sum \epsilon_i  (\ad_{E'_i})  \circ (\ad_{E'_i})^* (X) \right)    \\
& \qquad - \tfrac 12\left(\sum\epsilon_i  (\ad_{E_i^\perp})^* \circ  (\ad_{E_i^\perp})  (X) + 
\sum \epsilon_i  (\ad_{E'_i})^* \circ  (\ad_{E'_i})  (X)  \right). 
\end{align*}
We further expand  the second sum in each line above, using the decomposition
$(\ad_{E_i^\prime})^\ast(X) = \pi_{\frakn^\prime}
( (\ad_{E_i^\prime})^\ast (X) ) + 
\pi_{\frakn_0} ( (\ad_{E_i^\prime})^\ast(X) )$
to obtain
\begin{align}  \Ric^{\n} (X) 
&= \tfrac 14\left(A + \sum \epsilon_i  (\ad_{E'_i})  \circ \pi_{\n'}((\ad_{E'_i})^*
  (X)) +    B \right) 
\notag \\
& \qquad - \tfrac 12 \left(C + \sum  \epsilon_i \, \pi_{\n'}((\ad_{E'_i})^* \circ
  (\ad_{E'_i}) (X))  + D \right),\notag \end{align}
where $A, B, C$ and $D$ are defined by 
  \begin{alignat}{2} 
A &= \sum \epsilon_j (\ad_{E_j^\perp}) \circ  (\ad_{E_j^\perp})^* (X),& \qquad  
B &=  \sum \epsilon_i  (\ad_{E'_i}) \circ \pi_{\n_0}((\ad_{E'_i})^* (X)), \notag \\
C &=  \sum\epsilon_j  (\ad_{E_j^\perp})^* \circ  (\ad_{E_j^\perp})  (X),& \qquad
D &= \sum \epsilon_i \, \pi_{\n_0}((\ad_{E'_i})^* \circ (\ad_{E'_i}) (X)). \notag
\end{alignat}
The difference  $\Ric^{\n} (X) - \Ric^{\n'} (X) $ may  be written as 
\[ \Ric^{\n} (X) - \Ric^{\n'} (X)  = \smallfrac 14 A + \smallfrac 14  B - \smallfrac 12 C -\smallfrac 12 D. \]
We now simplify the terms $A, B, C$ and  $D$.  
First observe  
\begin{equation}\label{little identity} \langle (\ad_{E'_i})^*X, E_j^\perp \rangle = \langle X, [E'_i,
E_j^\perp] \rangle = -\langle X, [E_j^\perp , E'_i] \rangle = -\langle
(\ad_{E_j^\perp})^* X, E'_i \rangle.\end{equation}  
To begin, we show $B = A$: 
\begin{alignat*}{2}
 B &= \sum_i \epsilon_i  (\ad_{E'_i}) \circ \pi_{\n_0} ((\ad_{E'_i})^* X) \qquad &
  \text{by definition}\\ 
& = 
\sum_i\epsilon_i  [ E'_i, \sum_j \epsilon_j \langle (\ad_{E'_i})^* X, E_j^\perp \rangle
E_j^\perp] \qquad &  \strut \text{using the basis for $\frakn_0$}  \\
&=  \sum_i \epsilon_i  \sum_j \epsilon_j [ \langle (\ad_{E_j^\perp})^* X, E'_i \rangle
E_j^\perp, E'_i ] 
& \text{from Equation \eqref{little identity}}\\
&= \sum_j \epsilon_j [ E_j^\perp , \sum_i \epsilon_i \langle (\ad_{E_j^\perp})^* X,
E'_i\rangle E'_i]  &  \\
& = \sum_j \epsilon_j [E_j^\perp, (\ad_{E_j^\perp})^* X] & 
\text{because $X \in \frakn'$}\\ 
&= \sum_j \epsilon_j  (\ad_{E_j^\perp}) \circ  (\ad_{E_j^\perp})^* (X) 
= A. & \end{alignat*}

Next we prove $D=0$ by proving that each term in the sum vanishes.
We may assume $X \in \n_{\alpha}$ with $\alpha(Z) > 0$.  Take any $E'_i \in \n_{\beta}$ with $\beta(Z) > 0$. Then  $(\ad_{E'_i}) X \in \n_{\alpha + \beta}$. Hence $(\ad_{E'_i})^* (\ad_{E'_i}) X \in
\n_{\alpha} \subseteq \n'$, which implies its projection onto $\n_0$ vanishes: $\pi_{\n_0}((\ad_{E'_i})^* (\ad_{E'_i}) X) =0$. This shows that $D=0$.

Having shown that $A=B$ and $D=0$,  we see 
\[ \Ric^{\n} (X) - \Ric^{\n'} (X)  = \tfrac 14 A + \tfrac 14  B - \tfrac 12 C -\tfrac 12 D  = \tfrac 12 (A -  C).
\]
Substituting for $A$ and $C$ yields  the desired expression for $\Ric^{\n} (X) - \Ric^{\n'} (X) $.

We note that $\tfrac 12 (A -  C)$ is precisely the left-hand side of the Jacobi Star Condition in Equation \eqref{eq:jacobi-star}, while $\ad_{H-H'}(X)$ is precisely the right-hand side of the Jacobi Star Condition. Thus when the Jacobi Star Condition holds, we have
\[ \Ric^{\n} (X) - \Ric^{\n'} (X)  =  \ad_{H-H'}(X). 
\]
\end{proof}

The next corollary is the counterpart 
to Theorem 5.3 in \cite{tamaru-11}. Any attached subalgebra of an Einstein metric Lie algebra meeting the hypotheses of Theorem \ref{thm: ricci-n-diff}  will also be Einstein.

\begin{coro}\label{cor: Einstein}
   Let $(\fraks,\ip)$ be a solvable pseudo-Riemannian
   metric  Lie algebra of strong Iwasawa type. 
    Let $(\fraks', \ip')$ be the attached metric subalgebra defined by $\Lambda'$ 
   as in Definition \ref{defn: attached}. 
 If the Jacobi Star Condition holds, and 
   if $(\fraks,\ip)$ is Einstein with Einstein constant $\lambda$,  then $(\fraks',\ip')$ is Einstein with Einstein constant $\lambda$. 
\end{coro}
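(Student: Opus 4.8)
The plan is to read this off from the Main Theorem with essentially no extra computation. Since the Jacobi Star Condition holds, part (iii) of the Main Theorem is satisfied, hence so is part (i): $\Ric^{\s}(X) = \Ric^{\s'}(X)$ for every $X \in \fraks'$. Note that $(\fraks', \ip')$ is itself a solvable pseudo-Riemannian metric Lie algebra of strong Iwasawa type by Proposition \ref{prop: sprime}, with $\ip'$ nondegenerate by the remarks following Definition \ref{defn: attached}, so the notion of being Einstein is meaningful for it.

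Next I would invoke the hypothesis that $(\fraks,\ip)$ is Einstein with Einstein constant $\lambda$. At the level of endomorphisms this says $\Ric^{\s} = \lambda\,\Id_{\fraks}$. Restricting to $X \in \fraks'$ and combining with the identity above gives
\[
\Ric^{\s'}(X) = \Ric^{\s}(X) = \lambda X \qquad \text{for all } X \in \fraks',
\]
so $\Ric^{\s'} = \lambda\,\Id_{\fraks'}$; equivalently, in the bilinear-form formulation, $\ric^{\s'}(X,Y) = \la \Ric^{\s'}(X), Y\ra = \lambda\,\la X, Y\ra$ for all $X, Y \in \fraks'$, using nondegeneracy of $\ip'$ to pass between the two. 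Thus $(\fraks',\ip')$ is Einstein with the same Einstein constant $\lambda$.

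I do not expect any real obstacle here: once the Main Theorem is available the corollary is formal. The only points deserving a word of care are that $(\fraks',\ip')$ is a genuine nondegenerate metric Lie algebra, so that ``Einstein'' makes sense for it and the endomorphism condition $\Ric = \lambda\,\Id$ is equivalent to the tensor condition $\ric = \lambda\,\ip$, and that the argument is not circular, since the Main Theorem is established independently of this corollary. Alternatively, one could bypass the Main Theorem and argue directly, reducing the Einstein condition for $(\fraks',\ip')$ to conditions on $\fraka'$ and $\frakn'$ via Theorem \ref{thm: Ricci} and then using the identity $\Ric^{\n}(X) - \Ric^{\n'}(X) = \ad_{H-H'}(X)$ on $\frakn'$ supplied by Theorem \ref{thm: ricci-n-diff}; but routing through the Main Theorem is cleaner, and that is the route I would present.
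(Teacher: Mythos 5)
Your argument is correct, but it is not the route the paper takes. You deduce the corollary formally from the Main Theorem: Jacobi Star gives condition (iii), hence (i), and since $\Ric^{\fraks}=\lambda\,\Id$ on all of $\fraks$, restricting to $X\in\fraks'$ yields $\Ric^{\fraks'}(X)=\lambda X$, i.e.\ $(\fraks',\ip')$ is Einstein with the same constant; the conversion between $\Ric^{\fraks'}=\lambda\,\Id_{\fraks'}$ and $\ric^{\fraks'}=\lambda\,\ip'$ is justified by the nondegeneracy of $\ip'$, which you rightly flag. The paper instead proves the corollary \emph{before} the Main Theorem, going through the Conti--Rossi characterization (Theorem \ref{thm: 3.9 conti-rossi}): it checks the two conditions for $(\fraks',\ip')$ separately, obtaining $\Ric^{\frakn'}=\lambda\,\Id^{\frakn'}+\ad_{H'}$ from Theorem \ref{thm: ricci-n-diff} (which is where the Jacobi Star Condition enters), and matching the trace forms $\la \ad_A,\ad_B\ra_{\trace}^{\End(\frakn)}=\la \ad_A,\ad_B\ra_{\trace}^{\End(\frakn')}$ for $A,B\in\fraka'$ via $[\fraka',\frakn_0]=\{0\}$ from Proposition \ref{prop: alg properties}. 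Your route is shorter and cleaner once the Main Theorem is in hand, and you correctly observe there is no circularity since the Main Theorem's proof (via Theorem \ref{thm: Ricci}, Theorem \ref{thm: ricci-n-diff}, and the mean curvature formulas) nowhere uses this corollary; the paper's route, at the cost of more bookkeeping, works entirely at the level of $\frakn$ and $\fraka$ and incidentally records the nilsoliton-type identity $\Ric^{\frakn'}=\lambda\,\Id^{\frakn'}+\ad_{H'}$ and the trace-form equality, which are of independent use. If you present your version, you should either reorder the exposition so the Main Theorem precedes the corollary or state explicitly that its proof is independent.
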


In the proof of this corollary, we will use a characterization of Einstein scalar products from \cite{conti-rossi-22}, presented in Theorem \ref{thm: 3.9 conti-rossi} below.   The hypotheses of this theorem require that the solvable 
  metric Lie algebra in question admit 
  a {\em pseudo-Iwasawa decomposition}:  $\fraks$ is the sum of an abelian subspace $\fraka$ and a nilpotent ideal; and  $\ad_A = (\ad_A)^*$ for all $A \in \a$. 
By properties (i) and (ii) in Definition \ref{defn: Iwasawa}, for any  solvable pseudo-Riemannian  metric Lie algebra of strong Iwasawa type, the decomposition $\fraks = \fraka \oplus [\fraks, \fraks]$ is a pseudo-Iwasawa decomposition.

The trace form for a metric Lie algebra $(\frakg,\ip)$  is defined by $\la F,G\ra_{\trace}^{\End(\frakg)} = \trace (F \circ G)$
  for $F$ and $G$ in $\End(\frakg)$. The identity map for $\frakg$ is denoted by $\Id^{\frakg}$.

\begin{theorem}\cite[Theorem 3.9]{conti-rossi-22}\label{thm: 3.9 conti-rossi}
Let  $(\fraks,\ip)$ be a solvable metric Lie algebra with a pseudo-Iwasawa decomposition. 
Then 
  $(\fraks,\ip)$ is Einstein with Einstein constant $\lambda$ if and only if $\Ric^{\frakn} = \lambda \Id^{\frakn} + \ad_H$  and
  $\la \ad_A, \ad_B \ra_{\trace}^{\End(\frakn)} = \lambda \la A, B \ra$ for all $A$, $B$ in $\fraka.$
\end{theorem}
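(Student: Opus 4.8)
The plan is to prove the equivalence by decomposing the Einstein equation along the orthogonal splitting $\fraks = \fraka \oplus \frakn$ and matching it block-by-block against the three formulas of Theorem \ref{thm: Ricci}. A pseudo-Iwasawa decomposition has $\ad_A$ symmetric for every $A \in \fraka$, so that theorem applies. Writing $\ric$ for the Ricci form, the Einstein condition $\Ric^{\fraks} = \lambda\, \Id^{\fraks}$ is equivalent to $\ric(W,W') = \lambda \la W, W'\ra$ for all $W, W' \in \fraks$. Because $\fraka \perp \frakn$, the form $\la\cdot,\cdot\ra$ has no mixed $\fraka$--$\frakn$ entries, and by Theorem \ref{thm: Ricci}(2) neither does $\ric$; hence the full tensor equation is equivalent to its restriction to the three types of pairs $(A,A')$, $(A,Y)$, and $(X,Y)$ with $A,A' \in \fraka$ and $X,Y \in \frakn$. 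I would verify each block in both directions, which is what makes the stated biconditional transparent.

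First I would dispose of the mixed block: for $A \in \fraka$ and $Y \in \frakn$ one has $\la A, Y\ra = 0$ by orthogonality and $\ric(A,Y) = 0$ by Theorem \ref{thm: Ricci}(2), so this block holds automatically and imposes no condition. For the $\frakn \times \frakn$ block I would first record that the mean curvature vector $H$ lies in $\fraka$: for $X \in \frakn$ the operator $\ad_X$ is nilpotent, so $\tr \ad_X = 0$, whence $\la H, X\ra = 0$ for all $X \in \frakn$; as $\ip$ is nondegenerate and $\fraka \perp \frakn$, this forces $H \in \fraka$ and in particular $\ad_H$ is symmetric. By Theorem \ref{thm: Ricci}(3), for $X,Y \in \frakn$ we have $\ric(X,Y) = \ric^{\frakn}(X,Y) - \la \ad_H X, Y\ra$, so the block equation $\ric(X,Y) = \lambda \la X,Y\ra$ reads $\la \Ric^{\frakn} X, Y\ra = \la (\lambda \Id^{\frakn} + \ad_H) X, Y\ra$ for all $X, Y \in \frakn$. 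Since $\ip$ is nondegenerate on $\frakn$, this is equivalent to $\Ric^{\frakn} = \lambda \Id^{\frakn} + \ad_H$, the first stated condition.

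Finally the $\fraka \times \fraka$ block, which I expect to be the crux. By Theorem \ref{thm: Ricci}(1), $\ric(A,A') = -\tr(\ad_A \circ \ad_{A'})$, with the trace taken over $\fraks$. The key observation is that $\ad_A$ annihilates $\fraka$ (since $\fraka$ is abelian) and preserves $\frakn$ (since $\frakn$ is an ideal); thus $\ad_A$ is block-triangular for $\fraks = \fraka \oplus \frakn$ with vanishing $\fraka$-block, and therefore $\tr_{\fraks}(\ad_A \circ \ad_{A'}) = \tr_{\frakn}\bigl((\ad_A|_{\frakn}) \circ (\ad_{A'}|_{\frakn})\bigr) = \la \ad_A, \ad_{A'}\ra_{\trace}^{\End(\frakn)}$. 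Hence the block equation $\ric(A,A') = \lambda \la A,A'\ra$ collapses to the stated trace-form identity on $\fraka$ (up to the sign convention carried by Theorem \ref{thm: Ricci}(1)), and by bilinearity and symmetry it suffices to check it on a basis of $\fraka$. This reduction of the ambient trace to a trace on $\frakn$ is the main obstacle and is exactly the place where the abelian/ideal structure of the pseudo-Iwasawa decomposition is essential; without it the $\fraka$-block would not reduce to the trace form on $\End(\frakn)$. Assembling the three blocks, each of which is necessary and sufficient for its portion of the Einstein equation, yields the claimed equivalence.
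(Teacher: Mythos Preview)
The paper does not contain a proof of this statement: Theorem \ref{thm: 3.9 conti-rossi} is quoted from \cite{conti-rossi-22} and used as a black box (in the proof of Corollary \ref{cor: Einstein} and in Section \ref{section: example}), so there is no ``paper's own proof'' to compare against.

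That said, your argument is the natural one and is essentially correct. You correctly invoke Theorem \ref{thm: Ricci}, which applies because a pseudo-Iwasawa decomposition has $\ad_A$ symmetric; you correctly note the mixed block is vacuous; your $\frakn\times\frakn$ block reduction is clean (and your verification that $H\in\fraka$ via nilpotency of $\ad_X$ on $\fraks$ for $X\in\frakn$ is the right justification, since $\frakn$ is a nilpotent ideal); and your reduction of $\tr_{\fraks}(\ad_A\circ\ad_{A'})$ to $\tr_{\frakn}$ using that $\fraka$ is abelian and $\frakn$ is an ideal is exactly the point. The only place to be careful is the sign: Theorem \ref{thm: Ricci}(1) gives $\ric(A,A')=-\tr(\ad_A\circ\ad_{A'})$, so the Einstein equation on $\fraka$ reads $-\la\ad_A,\ad_{A'}\ra_{\trace}^{\End(\frakn)}=\lambda\la A,A'\ra$, and you should state explicitly how this matches the formulation in the theorem rather than hedging with ``up to the sign convention.''
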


Now we are ready for the proof of the corollary. 
\begin{proof}[Proof of Corollary \ref{cor: Einstein}] Suppose that $(\fraks,\ip)$ and  $(\fraks', \ip')$ satisfy the hypotheses of Corollary \ref{cor: Einstein}.
Since $(\fraks,\ip)$ is Einstein with Einstein constant $\lambda$,  by Theorem \ref{thm: 3.9 conti-rossi},  $\Ric^{\frakn} = \lambda \Id^{\frakn} + \ad_H$  and
  $\la \ad_A, \ad_B \ra_{\trace}^{\End(\frakn)} = \lambda \la A, B \ra$ for all $A$ and $B$ in $\fraka.$

By Theorem \ref{thm: ricci-n-diff}, $\Ric^{\frakn'}= 
 \Ric^\frakn - \ad_{(H-H')}.$ 
Restricting to $\frakn'$ gives the equality  
$\Ric^{\frakn'} - \ad_{H'} =\Ric^\frakn - \ad_H = \lambda\Id^{\frakn}$
and thus 
$\Ric^{\frakn'} =  \lambda\Id^{\frakn'} + \ad_{H'}$.
  
  Choose a compatible orthonormal basis $\{E_i\} = \{E_i^\prime\} \cup \{E_i^\perp\}  $ for $\frakn = \frakn' \oplus \frakn_0.$
  By Proposition \ref{prop: alg properties}, $[\fraka', \frakn_0] = \{0\} $, so for $A$ and $B$ in $\fraka'$, 
  $\ad_A (\ad_B E_i^\perp) = 0$ for all $i.$
Using this, we rewrite the trace form
  \begin{align*}
   \trace(\ad_A|_{\frakn} \circ \ad_B|_{\frakn})
   &= \sum \epsilon_i \la \ad_A  \ad_B E_i', E_i' \ra  + \sum \epsilon_i \la \ad_A  \ad_B E_i^\perp, 
   E_i^\perp \ra \\
   &=  \sum \epsilon_i \la \ad_A  \ad_B E_i', E_i' \ra \\
   &=
  \trace(\ad_A|_{\frakn'} \circ \ad_B|_{\frakn'}). 
\end{align*}
 Thus, 
 $ \la \ad_A, \ad_B \ra_{\trace}^{\End(\frakn)} =  \la \ad_A, \ad_B \ra_{\trace}^{\End(\frakn')}.$
Because the scalar product on $\fraks'$ is the restriction of the scalar product on $\fraks$, $\la A, B\ra =
\la A, B\ra'$ for any $A$ and $B$ in $\fraka'$.  Hence,  $ \la \ad_A, \ad_B \ra_{\trace}^{\End(\frakn')} = \lambda\la A,B \ra' $.

By  Theorem \ref{thm: 3.9 conti-rossi},  $(\fraks', \ip')$ is Einstein with Einstein constant  $\lambda$.
\end{proof}

We now prove the main theorem.
 \begin{proof}[Proof of Main Theorem:] We first show that (i) and (ii) are equivalent using  Theorem \ref{thm: Ricci}.  Let $A$ and $B$ be in $\fraka'$ and let  $X$ and $Y$ be in $\frakn'$. First observe that 
 \[  \ric^\s (A,X) = 0 = \ric^{\s'} (A,X).\]
 Next, we have
\begin{align*}
   \ric^\s (A,B) - \ric^{\s'} (A,B) &= \
   \trace^{\frakn} (\ad_A \circ \ad_B) -  \trace^{\frakn'
   } (\ad_A \circ \ad_B) 
   \\ &= \
   \trace^{\frakn_0} (\ad_A \circ \ad_B). 
\end{align*} 
When  $X \in \frakn_0,$  by Part (iii) of Prop \ref{prop: alg properties}, $[A, X] =0$ so $\trace^{\frakn_0} (\ad_A \circ \ad_B) =0$ and $\ric^\s (A,B) = \ric^{\s'} (A,B).$
Last,
  \[  \ric^\s (X,Y) - \ric^{\s'} (X,Y) = \ric^\n (X,Y) - \ric^{\n'} (X,Y) -\left(\langle \ad_H X, Y\rangle - \langle \ad_{H'} X,Y \rangle\right).\]
Hence the Ricci curvatures $\ric^\s$ and $\ric^{\s'}$
agree if and only if 
\[  \ric^\n (X,Y) - \ric^{\n'} (X,Y) =  \langle \ad_{H - H'} X, Y \rangle.\]

We show (ii) and (iii) are equivalent.  Let $\{E_j\} = \{ E_j^\perp \} \cup \{E_j'\}$ denote a compatible orthonormal basis of $\frakn = \n' \oplus \n_0$. 
Using Equations \ref{eqn: H} and \ref{eqn: H'},
\[  H - H' = \sum_{\alpha(Z)=0} (\dim \n_\alpha) H_\alpha =  -\sum  \epsilon_j (\ad_{E_j^\perp})^{\ast, \fraks} E_j^\perp. \]
 Combining this with Theorem \ref{thm: ricci-n-diff} we rewrite  the Jacobi Star Condition as 
\begin{align*} 
     \ad_{H -H'} X &=  \tfrac12 \sum \epsilon_j \left( (\ad_{E_j^\perp}) \circ  (\ad_{E_j^\perp})^* -   (\ad_{E_j^\perp})^* \circ  (\ad_{E_j^\perp})\right) X \\
 &=  \Ric^\n (X)  - \Ric^{\n'} (X) 
\end{align*}
for all $X$ in $\frakn'$.
Thus $\Ric^{\n} (X) - \Ric^{\n'} (X) =  \ad_{H - H'}(X)$ 
for all $X$ in $\frakn'$ if and only if the Jacobi Star Condition holds.  
\end{proof}

\subsection{Subalgebra geometry}

In this section, we show that an attached  submanifold  of an Einstein solvmanifold is a minimal submanifold. We also prove that 
the attached submanifold 
 is  totally geodesic  if and only if $\Lambda'$ and $\Lambda \setminus \Lambda'$ are orthogonal in $\fraka^*$.  Recall that $U(X,Y)$ is the symmetric part of $\nabla_X Y$  (cf. Equation (\ref{eqn: symm})).

\begin{lemma}\label{lem: ad-starX-X in n}
Let $(\s=\a \op \n,\ip)$ be a solvable pseudo-Riemannian metric  
Lie algebra  of strong Iwasawa type and let $A$ be in $\fraka$ and let 
$X$ and $Y$ be in root spaces $\frakn_\alpha$ and $\frakn_\beta$ respectively.
\begin{enumerate}
    \item $U(A,A)=0$,
    \item $U(A,X)=-\smallfrac{1}{2} \ad_A X$ lies  in $\frakn_\alpha$, 
    \item $U(X,X)= (\ad_X)^{\ast,\fraks}X$ lies in $\fraka$, and
    \item When $\alpha \neq \beta$, $\pi_\fraka(U(X,Y))=0$ and $\pi_\frakn ( U(X,Y) ) \in \frakn_{\beta - \alpha} + \frakn_{\alpha - \beta}$.
\end{enumerate} 
 \end{lemma}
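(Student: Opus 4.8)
The plan is to prove each of the four items by a direct computation from the defining formula \eqref{eqn: symm} for $U$, together with the strong Iwasawa hypotheses and the grading properties of the root space decomposition. The unifying principle is that $\langle U(V,W),Z\rangle = \tfrac12(\langle[Z,V],W\rangle + \langle[Z,W],V\rangle)$ for all $Z$, so to identify $U(V,W)$ it suffices to test against the various homogeneous pieces ($\fraka$ and each root space $\frakn_\gamma$) and use that the scalar product is nondegenerate on each piece and that distinct pieces are orthogonal.

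For (i): if $A, A' \in \fraka$, then for any $Z$ the brackets $[Z,A]$ and $[Z,A']$ lie in $\frakn$ (since $[\fraka,\fraka]=0$ and $[\fraks,\fraks]=\frakn$), while to pair nontrivially against $A$ or $A'$ we would need a component in $\fraka$; more directly, test $Z \in \fraka$: then $[Z,A]=[Z,A']=0$, and test $Z\in\frakn_\gamma$: then $\langle[Z,A],A'\rangle = 0$ because $[Z,A]\in\frakn_\gamma$ is orthogonal to $\fraka$. Hence $U(A,A')=0$, in particular $U(A,A)=0$. For (ii): recall the general identity $\nabla_X Y = -\tfrac12[X,Y] + U(X,Y)$, so with $X$ replaced by $A$ and using $\langle U(A,X),Z\rangle = \tfrac12(\langle[Z,A],X\rangle+\langle[Z,X],A\rangle)$; the second term vanishes since $[Z,X]\in\frakn$ pairs trivially with $A$ when $Z\in\frakn$, and vanishes when $Z\in\fraka$ since then $[Z,X]=\gamma$-graded but $A$... more cleanly: use $\langle[Z,A],X\rangle = -\langle[A,Z],X\rangle = \langle Z, [A,X]\rangle$ by symmetry of $\ad_A$ (strong Iwasawa (ii)), wait — symmetry of $\ad_A$ gives $\langle[A,Z],X\rangle = \langle Z,[A,X]\rangle$, so $\langle[Z,A],X\rangle = -\langle Z,[A,X]\rangle = -\langle Z, \alpha(A) X\rangle$. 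This should be reconciled with the claim $U(A,X) = -\tfrac12\ad_A X = -\tfrac12\alpha(A)X$; I will carry the signs carefully. That $-\tfrac12\ad_A X \in \frakn_\alpha$ is immediate since $\frakn_\alpha$ is an $\ad_A$-eigenspace. Note also $U(A,X)$ appearing as the symmetric part while $\nabla_A X = -\tfrac12[A,X] + U(A,X)$; but the claimed value $U(A,X) = -\tfrac12\ad_A X$ is what we must verify from \eqref{eqn: symm} directly.

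For (iii): this is already recorded in the text right after \eqref{eqn: symm}, namely $U(X,X) = -(\ad_X)^\ast X$; I will reconcile the sign with the stated $U(X,X) = (\ad_X)^{\ast,\fraks}X$ (the paper's convention may absorb a sign, or the claim is that it equals $(\ad_X)^{\ast}X$ up to the fixed sign — I will state it consistently with \eqref{eqn: symm}). That this lies in $\fraka$: compute $\langle U(X,X), Z\rangle = \langle [Z,X],X\rangle$ for $Z\in\frakn_\gamma$; since $[Z,X]\in\frakn_{\gamma+\alpha}$ and $X\in\frakn_\alpha$, orthogonality of distinct root spaces forces this to vanish unless $\gamma = 0$, which is not a root in strong Iwasawa type, so $U(X,X)\perp\frakn$, i.e. $U(X,X)\in\fraka$. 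For (iv): with $X\in\frakn_\alpha$, $Y\in\frakn_\beta$, $\alpha\neq\beta$, test against $Z\in\fraka$: $\langle[Z,X],Y\rangle+\langle[Z,Y],X\rangle = \alpha(Z)\langle X,Y\rangle + \beta(Z)\langle X,Y\rangle = 0$ since $\langle X,Y\rangle=0$ (distinct root spaces), giving $\pi_\fraka(U(X,Y))=0$; test against $Z\in\frakn_\gamma$: $\langle[Z,X],Y\rangle$ is nonzero only if $\gamma+\alpha=\beta$ and $\langle[Z,Y],X\rangle$ only if $\gamma+\beta=\alpha$, i.e. $\gamma = \beta-\alpha$ or $\gamma=\alpha-\beta$, so $\pi_\frakn(U(X,Y))\in\frakn_{\beta-\alpha}+\frakn_{\alpha-\beta}$ (with the convention that $\frakn_\delta=0$ if $\delta$ is not a root). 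I expect no serious obstacle here; the only delicate point is bookkeeping the signs and the adjoint conventions (the left-invariant vs. paper's $\ast$ normalization) so that the stated formulas in (ii) and (iii) come out exactly as written, and making sure to invoke nondegeneracy of $\ip$ on each root space and on $\fraka$ (which holds by strong Iwasawa type) to justify passing from "$\langle U(\cdot,\cdot),Z\rangle$ vanishes for $Z$ in a given graded piece" to "the projection of $U(\cdot,\cdot)$ onto that piece is zero."
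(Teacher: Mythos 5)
Your proof is correct and takes essentially the same route as the paper's: for each part you test $\langle U(\cdot,\cdot),Z\rangle$ from Equation \eqref{eqn: symm} against $\fraka$ and the individual root spaces, using symmetry of $\ad_A$, orthogonality of distinct root spaces, nondegeneracy of the scalar product on each graded piece, and the fact that $0$ is not a root for strong Iwasawa type. Your sign hesitations resolve in your favor: the computation you set up in (ii) does yield $U(A,X)=-\smallfrac{1}{2}\ad_A X$, and in (iii) the formula consistent with \eqref{eqn: symm} is $U(X,X)=-(\ad_X)^{\ast,\fraks}X$ (the sign the paper itself uses in the remark after \eqref{eqn: symm} and in its own proof of this part, so the printed statement has a sign slip), while the substantive claim that this vector lies in $\fraka$ is unaffected.
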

 \begin{proof}  Let $A \in \a$, let $\frakn_\alpha$ and $\frakn_\beta$ be nontrivial root spaces, and  let $X$ be in
   $\frakn_\alpha$ and let $Y$ be in $\frakn_\beta.$   
   (i)   By the orthogonality of $\a$ and $[\s,\s]$,  $U(A,A)=0$.
(ii) 
Since $\ad_A$ is symmetric, $U(A,X)=-\smallfrac{1}{2} \ad_A X$,  which is in the root
space $\frakn_\alpha$.
 (iii)   We show that $ (\ad_X)^{*,\fraks}X$ is orthogonal to $Y$.   Because 
 $(\s,\ip)$ is of strong Iwasawa type, $\beta$ is nonzero, so $\alpha \ne \alpha + \beta$.  
  Because the distinct root spaces $\frakn_{\alpha}$ and $\frakn_{\alpha+\beta}$ 
  are orthogonal, $[X,Y]$ is in $\frakn_{\alpha+\beta}$. Thus,
  \[ \la  U(X,X),Y \ra = \langle-(\ad_X)^{*,\fraks} X, Y\rangle = 
  \langle X,[X,Y]\rangle =0.\] Hence  $(\ad_X)^{*,\fraks}X$ is in $\fraka$.
  (iv) To see that $\pi_\fraka(U(X,Y))=0$,
\begin{align*}\la U(X,Y), A \ra &=\tfrac12 ( \la
[A,X],Y\ra +\la[A ,Y],X\ra) \\
&=\tfrac12(\alpha(A)\la X,Y\ra +\beta(A)\la Y,X\ra)
\\
&=\tfrac12(\alpha(A) \cdot 0  +\beta(A) \cdot 0 ) = 0.
\end{align*}
The second observation follows from the definition of $U$ and from the fact that $[\frakn_{\alpha}, \frakn_{\beta}] \subseteq\frakn_{\alpha + \beta}$.
\end{proof}

Lemma 6.1 of \cite{tamaru-11} generalizes as follows.
\begin{lemma}\label{lem: secondfundform}
Let $h$ be the second fundamental form of an attached subalgebra $(\s',\ip')$ of the metric Lie algebra $(\s,\ip)$. Then 
\begin{enumerate}
\item $h(A,A)=0$ for all $A$ in $\a'$,  \label{h on A is zero}
\item $h(A,X) =0$ for all $A$ in $\a'$ and $X$ in  $\n'$, and 
\item for each root space $\n_\alpha$  in  $\n'$,  and for all  $X$ in  $\n_\alpha$, $h(X, X) = -\pi_{\a \ominus  \a'}((\ad_{X})^{*,\fraks} X)$.\label{h root vector}
\end{enumerate}
\end{lemma}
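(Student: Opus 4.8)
The plan is to compute $h(X,Y) = U(X,Y) - U'(X,Y)$ directly from the formula \eqref{eqn: symm}, exploiting that the scalar product on $\fraks'$ is the restriction of that on $\fraks$, so that $U$ and $U'$ are computed from the \emph{same} bracket relations but project onto different spaces. For parts (i) and (ii) I would use Lemma \ref{lem: ad-starX-X in n}: since $\fraka' \subseteq \fraka$ and $\fraks'$ is again of strong Iwasawa type by Proposition \ref{prop: sprime}, parts (i) and (ii) of that lemma apply verbatim inside $(\fraks',\ip')$ as well. For $A \in \fraka'$, Lemma \ref{lem: ad-starX-X in n}(i) gives $U(A,A) = 0$ and $U'(A,A) = 0$, hence $h(A,A) = 0$. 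For $A \in \fraka'$ and $X \in \frakn' \cap \frakn_\alpha$, Lemma \ref{lem: ad-starX-X in n}(ii) gives $U(A,X) = -\tfrac12 \ad_A X$ and, computing inside $\fraks'$, $U'(A,X) = -\tfrac12 \ad_A X$ as well (the bracket $[A,X]$ is the same element, and it lies in $\frakn_\alpha \subseteq \frakn'$, so no projection is lost); hence $h(A,X) = 0$. Extend to general $A \in \fraka'$, $X \in \frakn'$ by linearity over the root-space decomposition of $\frakn'$.

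For part (iii), take $X \in \frakn_\alpha$ with $\alpha(Z) > 0$. By Lemma \ref{lem: ad-starX-X in n}(iii) applied in $\fraks$, $U(X,X) = (\ad_X)^{\ast,\fraks} X \in \fraka$; applied in $\fraks'$ (valid since $\fraks'$ is of strong Iwasawa type and $\alpha$ restricted to $\fraka'$ is still a nonzero root), $U'(X,X) = (\ad_X)^{\ast,\fraks'} X \in \fraka'$. So $h(X,X) = (\ad_X)^{\ast,\fraks} X - (\ad_X)^{\ast,\fraks'} X$, and it remains to identify this as $-\pi_{\fraka \ominus \fraka'}\big((\ad_X)^{\ast,\fraks} X\big)$. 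Equivalently, I must show that $(\ad_X)^{\ast,\fraks'} X = \pi_{\fraka'}\big((\ad_X)^{\ast,\fraks} X\big)$. This follows from the characterization of the adjoint via the scalar product: for any $A \in \fraka'$,
\[
\big\la (\ad_X)^{\ast,\fraks'} X, A \big\ra' = \big\la X, [X,A] \big\ra = \big\la (\ad_X)^{\ast,\fraks} X, A \big\ra = \big\la \pi_{\fraka'}\big((\ad_X)^{\ast,\fraks} X\big), A \big\ra,
\]
where the last equality uses that $\fraka'$ is orthogonal to $\fraka \ominus \fraka'$ and that, by Lemma \ref{lem: ad-starX-X in n}(iii), $(\ad_X)^{\ast,\fraks}X$ already lies in $\fraka$. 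Since the scalar product is nondegenerate on $\fraka'$ and both sides lie in $\fraka'$, we conclude $(\ad_X)^{\ast,\fraks'} X = \pi_{\fraka'}\big((\ad_X)^{\ast,\fraks} X\big)$, giving $h(X,X) = (\ad_X)^{\ast,\fraks} X - \pi_{\fraka'}\big((\ad_X)^{\ast,\fraks} X\big) = \pi_{\fraka \ominus \fraka'}\big((\ad_X)^{\ast,\fraks} X\big)$ up to the sign, which I would track carefully against the sign convention $U(X,X) = -(\ad_X)^\ast X$ versus the stated formula.

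The one point requiring genuine care — the main obstacle — is verifying that Lemma \ref{lem: ad-starX-X in n} is legitimately applicable inside $(\fraks',\ip')$: this needs $(\fraks',\ip')$ to be of strong Iwasawa type, which is exactly Proposition \ref{prop: sprime}, and it needs the root-space structure of $\fraks'$ to be inherited compatibly, i.e. that $\frakn_\alpha \cap \frakn' = \frakn_\alpha$ for the relevant $\alpha$ and that $\ad_A$ for $A \in \fraka'$ acts on $\frakn'$ with the eigenvalue $\alpha(A)$ — both immediate from the definition of the attached subalgebra. The other subtlety is bookkeeping: in the formula $\langle U(X,Y), Z\rangle = \tfrac12(\langle[Z,X],Y\rangle + \langle[Z,Y],X\rangle)$ for $Z \in \fraks'$, every bracket and inner product appearing is the restriction of the ambient one, so $U'(X,Y) = \pi_{\fraks'}(U(X,Y))$ whenever $X,Y \in \fraks'$; once this projection identity is stated cleanly, parts (i)–(iii) all fall out of it combined with Lemma \ref{lem: ad-starX-X in n}. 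I would open the proof by establishing $U'(X,Y) = \pi_{\fraks'}(U(X,Y))$ and then dispatch the three cases.
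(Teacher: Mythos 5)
Your proposal is correct and follows essentially the same route as the paper's proof: parts (i) and (ii) come straight from Lemma \ref{lem: ad-starX-X in n} applied in both $\fraks$ and $\fraks'$ (legitimate since $(\fraks',\ip')$ is of strong Iwasawa type by Proposition \ref{prop: sprime}), and part (iii) rests on the identity $(\ad_X)^{\ast,\fraks'}X = \pi_{\fraka'}\bigl((\ad_X)^{\ast,\fraks}X\bigr)$, which the paper asserts and you verify via the defining property of the adjoint. The sign you flag is settled by the convention $U(X,X) = -(\ad_X)^{\ast}X$ from Section \ref{sec: Ricci solv} (the sign written in the statement of Lemma \ref{lem: ad-starX-X in n}(iii) is the one that is off), and with it your computation gives exactly $h(X,X) = -\pi_{\fraka \ominus \fraka'}\bigl((\ad_X)^{\ast,\fraks}X\bigr)$ as claimed.
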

Recall from Section \ref{sec: Ricci solv} that 
 the second fundamental form 
for $(\fraks', \ip')$ in $(\fraks,\ip)$ is given by 
$h(X,Y) =
U(X,Y) - U'(X,Y),$
for $X,Y \in \fraks',$ where $U$ is as in Equation \eqref{eqn: symm}. 

\begin{proof} By Lemma \ref{prop: sprime}, the attached subalgebra is of strong Iwasawa type.
(i) Let $A \in \a'$.  By Lemma \ref{lem: ad-starX-X in n}, since $U(A,A)=0$ and
 $U'(A,A)=0$, we have $h(A,A)=0$. 
(ii) Suppose $A \in \fraka'$ and $X$ in $\frakn'$.  By Lemma \ref{lem: ad-starX-X in n},  $U(A,X) = -\tfrac12 \ad_A X = U'(A,X)$ is in $\frakn'$.  Hence $h(A,X)=U(A,X)-U'(A,X)=0.$ 
(iii) Let  $X$ be  in  a root space of $\n'$. 
  By Lemma \ref{lem: ad-starX-X in n},  $U'(X, X) =  (\ad_{X})^{\ast, \fraks'} X$ is in  $\fraka'.$ 
Note $(\ad_{X})^{\ast, \fraks'} = \pi_{\fraks'} \circ \ad_{X}^{\ast, \fraks}$.  Hence $(\ad_{X})^{\ast, \fraks'} X =  \pi_{\a'}((\ad_{X})^{\ast, \fraks} X)$.   Thus we have
\begin{align*}
h(X,X) &=  U(X,X) - U'(X,X) \\ 
&= -(\ad_{X})^{*,\fraks} X + (\ad_{X})^{*,\fraks'} X\\
&= -((\ad_{X})^{*,\fraks} X - \pi_{\fraka'}((\ad_{X})^{*,\fraks'} X)).
      \end{align*}
Because  $(\ad_{X})^{*,\fraks} X \in \fraka$, the right side may be rewritten as $-\pi_{\a \ominus \a'}((\ad_{X})^{\ast,\fraks} X)$.
\end{proof}

The next theorem is analogous to Theorem 6.2 of \cite{tamaru-11}.
\begin{theorem}\label{thm: minimal}
An attached submanifold  of a solvmanifold  is  minimal.
\end{theorem}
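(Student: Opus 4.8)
The plan is to show directly that the trace of the second fundamental form $h$ of $(\s',\ip')$ inside $(\s,\ip)$ vanishes, by substituting the explicit formulas of Lemma~\ref{lem: secondfundform} into the trace and recognizing the outcome as $\pi_{\a\ominus\a'}(H')$, which is zero by Proposition~\ref{prop: mean curvature vectors}.

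First I would fix an orthonormal basis of $\s'$ adapted to $\s'=\a'\oplus\n'$: an orthonormal basis $\{A_i\}$ of $\a'$ (on which $\ip$ is positive definite) together with a compatible orthonormal basis $\{E_j'\}$ of $\n'$, each $E_j'$ lying in some root space $\n_{\alpha_j}$ with $\alpha_j(Z)>0$. Such a basis exists by the discussion preceding Definition~\ref{defn: attached}, since $\ip$ is nondegenerate on each root space. Because the trace of a symmetric tensor only sees diagonal terms, $\tr h=\sum_i h(A_i,A_i)+\sum_j\epsilon_j\,h(E_j',E_j')$. By Lemma~\ref{lem: secondfundform}(i) each term $h(A_i,A_i)$ vanishes, so $\tr h=\sum_j\epsilon_j\,h(E_j',E_j')$.

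Next I would apply Lemma~\ref{lem: secondfundform}(iii), which gives $h(E_j',E_j')=-\pi_{\a\ominus\a'}\big((\ad_{E_j'})^{\ast,\s}E_j'\big)$, together with the root-vector identity from Section~\ref{subsection-mcv}, namely $(\ad_{E_j'})^{\ast,\s}E_j'=-\epsilon_j H_{\alpha_j}$. Hence $\epsilon_j\,h(E_j',E_j')=\pi_{\a\ominus\a'}(H_{\alpha_j})$, and grouping the basis vectors by the root space containing them yields $\tr h=\pi_{\a\ominus\a'}\big(\sum_{\alpha(Z)>0}(\dim\n_\alpha)H_\alpha\big)=\pi_{\a\ominus\a'}(H')$ by Equation~\eqref{eqn: H'}. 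Since $H'\in\a'$ by Proposition~\ref{prop: mean curvature vectors}, this projection is zero, so $\tr h=0$ and the attached submanifold is minimal.

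I do not anticipate a genuine obstacle here: the argument is a short assembly of Lemma~\ref{lem: secondfundform}, the root-vector formula, and Proposition~\ref{prop: mean curvature vectors}. The only points requiring care are the sign in $(\ad_X)^{\ast,\s}X=-\epsilon\,H_\alpha$ (which follows from $H_\alpha=-\epsilon\,(\ad_X)^{\ast}X$ and $\epsilon^2=1$) and the fact that $\pi_{\a\ominus\a'}$ commutes past the finite sum over root spaces; both are routine.
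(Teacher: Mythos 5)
Your proposal is correct and follows essentially the same route as the paper: split $\trace(h)$ over orthonormal bases of $\a'$ and $\n'$, kill the $\a'$-terms by Lemma~\ref{lem: secondfundform}(i), express the $\n'$-terms via Lemma~\ref{lem: secondfundform}(iii), and conclude by recognizing the resulting sum as $\pi_{\a\ominus\a'}(H')$, which vanishes by Proposition~\ref{prop: mean curvature vectors}. Your extra step of rewriting $(\ad_{E_j'})^{\ast,\s}E_j'=-\epsilon_j H_{\alpha_j}$ and invoking Equation~\eqref{eqn: H'} is just an explicit unpacking of the paper's identification of $-\sum_j\epsilon_j(\ad_{E_j'})^{\ast}E_j'$ with the mean curvature vector.
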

\begin{proof} Let $(\fraks,\ip)$ be a solvable pseudo-Riemannian
   metric  Lie algebra of strong Iwasawa type. 
    Let $(\fraks', \ip')$ be the attached metric subalgebra.  
    Let  $\{ A_i \}$ and $\{E_j'\}$ be orthonormal bases
for $\a'$ and $\n'$, respectively.  Then
$$\trace(h) = \sum_i \epsilon_i h(A_i,A_i) + \sum_j \epsilon_j h(E_j', E_j').$$
By Lemma \ref{lem: secondfundform}, $h(A_i,A_i)=0$ for each $i$, and 
$h(E_j', E_j') = -\pi_{\a \ominus \a'}((\ad_{E_j'})^* E_j')$. Therefore,
$$\trace(h) =  \sum_j \epsilon_j  h(E_j', E_j') = -\sum_j \epsilon_j \pi_{\a \ominus \a'}((\ad_{E_j'})^* E_j').$$
By Proposition \ref{prop: mean curvature vectors}, 
the  mean curvature vector  
$-\sum_j \epsilon_j ((\ad_{E_j'})^* E_j')$ is in $\a'$.  Therefore, 
the projection $\pi_{\a \ominus \a'} \left(-\sum_j \epsilon_j (\ad_{E_j'})^* E_j' \right)  $ is zero, 
and the trace of $h$ vanishes, as claimed.
\end{proof}
 
 The next proposition, a generalization of 
  Proposition 6.4 of \cite{tamaru-11},  characterizes  totally geodesic attached submanifolds. 
\begin{prop}\label{prop: tot-geodesic}  
The attached metric subalgebra  $(\fraks',\ip)$ is totally geodesic in  $(\fraks, \ip)$ 
if and only if
  $\Lambda'$ and $\Lambda \setminus \Lambda'$ are orthogonal.
\end{prop}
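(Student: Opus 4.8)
The plan is to use Lemma~\ref{lem: secondfundform}, which reduces the question of total geodesy to showing that $h(X,X)=0$ for all $X$ in each root space $\frakn_\alpha \subseteq \frakn'$, i.e. that $\pi_{\fraka \ominus \fraka'}\big((\ad_X)^{\ast,\fraks}X\big) = 0$ for all such $X$. By Lemma~\ref{lem: ad-starX-X in n}(iii), $(\ad_X)^{\ast,\fraks}X$ lies in $\fraka$; in fact, for a unit vector $X \in \frakn_\alpha$ it equals $-\epsilon H_\alpha$ (this is the root-vector computation from Section~\ref{subsection-mcv}). So $h(X,X) = \epsilon\,\pi_{\fraka\ominus\fraka'}(H_\alpha)$, and total geodesy is equivalent to: $H_\alpha \in \fraka'$ for every root $\alpha$ with $\alpha(Z)>0$. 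The right-hand side of the claimed equivalence, orthogonality of $\Lambda'$ and $\Lambda\setminus\Lambda'$, must be translated into the same linear-algebra statement about root vectors.

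First I would record the dual description of $\fraka'$ from Lemma~\ref{lem: root perp}: $\fraka' = \bigcap_{\alpha_k\in\Lambda'}\ker\alpha_k$. A vector $A\in\fraka$ lies in $\fraka'$ iff $\alpha_k(A)=0$ for all $\alpha_k\in\Lambda'$, equivalently $\langle H_{\alpha_k}, A\rangle = 0$ for all $\alpha_k\in\Lambda'$; thus $\fraka' = \big(\myspan\{H_{\alpha_k} : \alpha_k\in\Lambda'\}\big)^{\perp}$ inside $\fraka$ (the scalar product is positive definite on $\fraka$, so these orthogonal complements behave as expected). Hence $H_\alpha\in\fraka'$ iff $\langle H_\alpha, H_{\alpha_k}\rangle = 0$, i.e. $\langle\alpha,\alpha_k\rangle = 0$, for every $\alpha_k\in\Lambda'$. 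So total geodesy holds iff $\langle\alpha,\alpha_k\rangle = 0$ for all $\alpha_k\in\Lambda'$ and all roots $\alpha$ with $\alpha(Z)>0$.

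Next I would run the two directions. For the easy direction, suppose $\Lambda'$ and $\Lambda\setminus\Lambda'$ are orthogonal. Every $\alpha_j\in\Lambda\setminus\Lambda'$ satisfies $\alpha_j(Z)>0$, so it is one of the roots in question, and each $\alpha_j$ lies in $\fraka^*$... actually more directly: take a root $\alpha$ with $\alpha(Z)>0$ and write $\alpha = \sum_i m_i\alpha_i$ with $m_i\in\boldZ_{\geq 0}$; since $\alpha(Z) = \sum_i m_i\alpha_i(Z) > 0$ and $\alpha_i(Z)>0$ exactly for $\alpha_i\in\Lambda\setminus\Lambda'$, at least one $\alpha_i\in\Lambda\setminus\Lambda'$ has $m_i>0$. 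But this decomposition mixes $\Lambda'$ and $\Lambda\setminus\Lambda'$ terms, so orthogonality of the two sets does not immediately give $\langle\alpha,\alpha_k\rangle=0$ for $\alpha_k\in\Lambda'$ — this is the main obstacle, and it is exactly where the reflection hypotheses in Definition~\ref{defn: attached} must enter. The resolution: $\check s_{\alpha_k}$ permutes the roots $\alpha$ with $\alpha(Z)>0$; consider the root subsystem generated by such $\alpha$. Under the orthogonality hypothesis, the $m_i$ corresponding to $\alpha_k\in\Lambda'$ contribute a vector orthogonal to all of $\Lambda\setminus\Lambda'$, while the $\Lambda\setminus\Lambda'$-part is orthogonal to all of $\Lambda'$; so $\langle\alpha,\alpha_k\rangle = \langle\sum_{\alpha_i\in\Lambda'}m_i\alpha_i,\alpha_k\rangle$, which is $\langle$\,the $\Lambda'$-part of $\alpha$\,$,\alpha_k\rangle$. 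One then argues the $\Lambda'$-part of any root $\alpha$ with $\alpha(Z)>0$ must be zero: otherwise applying $\check s_{\alpha_k}$ repeatedly (a Weyl-group element built from reflections in $\Lambda'$) would move $\alpha$ by a nonzero amount within the span of $\Lambda'$ while staying in the $\alpha(Z)>0$ set, and positivity of all coefficients plus finiteness of the root set forces the $\Lambda'$-component to vanish — equivalently, the $\alpha(Z)>0$ roots with their reflection structure form a system in which no $\Lambda'$-direction appears. Concretely: $\check s_{\alpha_k}(\alpha) = \alpha - \tfrac{2\langle\alpha,\alpha_k\rangle}{\langle\alpha_k,\alpha_k\rangle}\alpha_k$ must again be a root with nonnegative integer coefficients and positive $Z$-value; iterating and using that the set of such roots is finite and the $Z$-grading is preserved bounds $\langle\alpha,\alpha_k\rangle$, and a standard argument (sum of an orbit under $s_{\alpha_k}$ is fixed, hence orthogonal to $\alpha_k$) pins it to $0$. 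For the converse, suppose $(\fraks',\ip')$ is totally geodesic, so $H_\alpha\in\fraka'$ for every $\alpha$ with $\alpha(Z)>0$; in particular for each $\alpha_j\in\Lambda\setminus\Lambda'$ we get $H_{\alpha_j}\in\fraka'$, hence $\langle H_{\alpha_j},H_{\alpha_k}\rangle = 0$, i.e. $\langle\alpha_j,\alpha_k\rangle = 0$, for all $\alpha_k\in\Lambda'$ — which is precisely orthogonality of $\Lambda\setminus\Lambda'$ and $\Lambda'$. I expect the hard direction (orthogonality $\Rightarrow$ totally geodesic) to require the most care, since it genuinely uses the permutation-and-dimension hypotheses on reflections, and I would model that argument on the proof of Proposition~\ref{prop: mean curvature vectors}, where the same reflection-invariance was exploited.
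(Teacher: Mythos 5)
Your proposal has a genuine gap in the direction ``orthogonality $\Rightarrow$ totally geodesic,'' and it occurs at the very first step: the claimed reduction of total geodesy to $h(X,X)=0$ for $X$ in a \emph{single} root space $\frakn_\alpha \subseteq \frakn'$ is not justified by Lemma \ref{lem: secondfundform} and is not automatic. By polarization you must control $h(X,Y)$ for $X \in \frakn_\alpha$, $Y \in \frakn_\beta$ with $\alpha \neq \beta$ two roots with $\alpha(Z),\beta(Z)>0$, and here the normal component of $h(X,Y)=\pi_{\fraks \ominus \fraks'}(U(X,Y))$ need not lie in $\fraka \ominus \fraka'$ at all: by Lemma \ref{lem: ad-starX-X in n}(iv), $U(X,Y)$ has no $\fraka$-component but can have components in $\frakn_{\alpha-\beta}+\frakn_{\beta-\alpha}$, which sit inside $\frakn_0$ exactly when $(\alpha-\beta)(Z)=0$. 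Your criterion ``$H_\alpha \in \fraka'$ for every root with $\alpha(Z)>0$'' only sees the $\fraka_0$-part of $h$, so as written your argument could not detect a failure of total geodesy coming from $\frakn_0$. This is precisely the case the paper's proof spends its second half on: if $\la U(X,Y),W\ra\neq 0$ for some $W\in\frakn_\beta\subseteq\frakn_0$, then $\beta=\pm(\alpha_i-\alpha_j)$, and $H_\beta$ would lie simultaneously in the span of the $\Lambda'$ root vectors and the span of the $\Lambda\setminus\Lambda'$ root vectors, contradicting their orthogonality. (Your criterion can in fact be used to close this gap --- if $H_\alpha, H_\beta \in \fraka'$ and $(\alpha-\beta)(Z)=0$ then $H_{\alpha-\beta}\in\fraka'\cap(\fraka')^\perp=\{0\}$ --- but you never raise the issue, so the implication is unproved.)

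A second, smaller problem is that your ``hard direction'' (orthogonality forces every root with $\alpha(Z)>0$ to have vanishing $\Lambda'$-component) is only gestured at. The parenthetical ``sum of an orbit under $s_{\alpha_k}$ is fixed, hence orthogonal to $\alpha_k$'' gives nothing for a single reflection: the orbit sum $2\alpha - c\,\alpha_k$ being orthogonal to $\alpha_k$ is a tautology. A correct version needs the full group generated by $\{\check{s}_{\alpha_k}: \alpha_k\in\Lambda'\}$, finiteness of $\Delta$, preservation of the set $\{\alpha : \alpha(Z)>0\}$ from Definition \ref{defn: attached}, and nonnegativity of root coefficients (e.g.\ averaging over the orbit and using that $\Lambda'$ is linearly independent), none of which you carry out. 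Note also that the paper does not route through this combinatorial statement: for the diagonal case it simply computes $\la (\ad_X)^{\ast,\fraks}X, B_{\alpha_j}\ra = -\alpha_i(B_{\alpha_j})\la X,X\ra=0$ against the dual basis vectors spanning $\fraka_0$, and it uses the orthogonal splitting of $\fraka$ into the spans of the $\Lambda'$ and $\Lambda\setminus\Lambda'$ root vectors for the cross terms. Your converse direction (totally geodesic $\Rightarrow$ $\la\alpha_j,\alpha_k\ra=0$ via $h(E,E)=\epsilon\,\pi_{\fraka\ominus\fraka'}(H_{\alpha_j})$) is correct and agrees with the paper.
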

\begin{proof} Suppose that  the sets $\Lambda'$ and $\Lambda \setminus \Lambda'$ are orthogonal.
By definition of the dual scalar product on $\fraka^*$, the scalar  product 
 $\la H_\alpha, H_\beta \ra =\la \alpha,\beta \ra = 0$  for any $\alpha \in \Lambda'$ and any $\beta \in \Lambda \setminus \Lambda'$. 
 By the definition of attached, $\Lambda$ is a basis for $\fraka^\ast$ and therefore,  $\{H_{\alpha_i} \mid \alpha_i \in \Lambda\}$  is a basis for $\fraka$. It follows that $\fraka$ is the orthogonal direct sum of 
 $\myspan\{H_{\alpha_j} \mid \alpha_j \in \Lambda' \}$ and $\myspan \{ H_{\alpha_i} \mid \alpha_i \in  \Lambda \setminus \Lambda'\}.$

To prove that $\s'$ is totally geodesic in $\fraks$, we show that the second fundamental form $h$ for $\s'$ vanishes. We already know from Lemma \ref{lem: secondfundform} that 
$h(A,A)=0$ for any $A \in \fraka'$ and 
$h(A,X)=0$ for any $A \in \fraka'$ and $X$ in $\frakn'$.  It remains to show that $h(X, Y) = 0$ for $X$ and $Y$ in $\frakn'$. 
By the definitions of $U$ and $U'$, the  vector $U'(X,Y)$ is the orthogonal projection of $U(X,Y)$ to $\fraks'.$  It follows from the  definition of the second fundamental form  that  to show $h(X, Y) = 0$ for $X$ and $Y$ in $\frakn'$, it suffices to show that  $U(X,Y) \in \fraks'$. 
To show  $U(X,Y) \in \fraks'$ it is enough to demonstrate that this holds for 
$X$ and $Y$ in root spaces in $\frakn'.$  

Let $X$ be in $\frakn_{\alpha_i}$ and let $Y$ be in $\frakn_{\alpha_j},$  where  $\frakn_{\alpha_i}$ and $\frakn_{\alpha_j}$ are in $  \frakn'$ (and $\alpha_i, \alpha_j  \in \Lambda \setminus
\Lambda'$).  
First we consider $h(X,Y)$ in the case that $\alpha_i = \alpha_j.$  By a 
polarization argument, it suffices to show that  $h(X,X) =0 $.  
By Part (iii) of
Lemma  \ref{lem: secondfundform}, 
 it suffices to show that $(\ad_X)^{\ast,\fraks} X \in
\fraka'$. Because  $(\ad_X)^{\ast,\fraks} X \in \fraka$,
 we want to establish that
\[ \la (\ad_X)^{\ast,\fraks} X, B_{\alpha_j} \ra = 0 \]
for $B_{\alpha_j} \in \fraka_0$ (so $\alpha_j \in \Lambda'$,
$\alpha_i \ne \alpha_j$).  But 
\begin{align*} \la (\ad_X)^{\ast,\fraks} X, B_{\alpha_j} \ra &= 
- \la  X, [B_{\alpha_j},X] \ra  \\ 
&= - \la  X, \alpha_i(B_{\alpha_j})X \ra \\
&= - \la  X, \delta_{\alpha_i,\alpha_j} X \ra  \\ 
&= 0. \end{align*}

Now suppose $\alpha_i \ne \alpha_j.$    The root spaces 
$\frakn_{\alpha_i}$ and $\frakn_{\alpha_j}$ are orthogonal.
By Lemma  \ref{lem: ad-starX-X in n}, because $X$ and $Y$ 
are in distinct root spaces, $U(X,Y)$ is orthogonal to $\fraka$. 
Now suppose that $W \in \n_\beta$ is in $\n_0$ and that
$\la U(X,Y), W \ra \neq 0.$  By Lemma \ref{lem: ad-starX-X in n}, this
is nontrivial only if $\beta = \pm(\alpha_1 - \alpha_2 )$. 
Without loss of generality, suppose that $\beta=\alpha_1 - \alpha_2.$  
In that case, the nontrivial vector 
$H_\beta = H_{\alpha_i} - H_{\alpha_j}$ is in $\fraka'$, contradicting 
the orthogonality of $\myspan\{H_{\alpha_j} \mid \alpha_j \in \Lambda' \}$
and $\myspan \{ H_{\alpha_i} \mid \alpha_i \in  
\Lambda \setminus \Lambda'\}$ established in the first paragraph. 
Thus $U(X,Y)$ is orthogonal to $\n_0$. 
We have shown that $U(X,Y)$ is orthogonal to $\fraka \oplus \frakn_0;$ 
hence it is in $\fraks'$, and $h(X,Y)=0$ as desired. 

Conversely, when $\Lambda'$ and $\Lambda \setminus \Lambda'$ are not orthogonal, there are roots 
$\alpha \in \Lambda'$ and $\beta \in \Lambda \setminus \Lambda'$ 
so that the corresponding root vectors $H_\alpha$ and $H_\beta$ are
not orthogonal. Let $E'_i \in \n_\alpha$. By Lemma \ref{lem: secondfundform}, 
$h(E'_i,E'_i) = \pi_{\a \ominus \a'}(-(\ad_{E'_i})^* E'_i) = \pi_{\a \ominus \a'}(\epsilon_i H_\alpha)$. We know that
 $\pi_{\a \ominus \a'}(\epsilon_i H_\alpha)$ is nonzero because $\la H_\alpha, H_\beta \ra$ is nonzero,   and $H_\beta$ is in $\a \ominus \a'$. Thus,  $\s'$ is not totally geodesic in 
 $\s$.
\end{proof}

\section{Special Case: Symmetric spaces}\label{section: symmetric spaces}

The Main Theorem  applies to symmetric spaces of noncompact type, the setting of \cite{tamaru-11}.
We derive Tamaru's Theorem 5.3 from  \cite{tamaru-11} as a corollary to our Main Theorem. 

\subsection{Symmetric spaces of noncompact type as solvable metric Lie algebras}\label{subsection: symmetric space}

Any symmetric space $(M,g)$ of noncompact type is isometric to a solvmanifold defined by a solvable metric Lie algebra.  Let  $M = G/K$ be a symmetric space of noncompact type, where $G$ is the connected component of the identity in the isometry group of $M.$   The corresponding Lie algebra  $\g$ is semisimple. Let $\sigma$ denote the Cartan involution of $\frakg$.  The Killing form  $B$ defines  a positive definite,  symmetric bilinear form $B_\sigma$ on $\frakg$: 
$$B_\sigma(X,Y) =  -B(X, \sigma(Y)), \text{ for any $X$ and $Y$ in $\frakg$}.$$ 
For any $X, Y$ and $Z$ in $\g$,
$B_{\sigma} ([Z,X],Y) =-B_{\sigma}(X,[\sigma(Z),Y]).$
Let $\g = \frakk \oplus \a \oplus \n$ be the Iwasawa decomposition of $\frakg.$ 
The rule 
\begin{equation}\label{eqn: symmetric ip} \la ~,~\ra = 2B_\sigma |_{\fraka \times \fraka}(~,~) +  B_\sigma |_{\frakn \times \frakn}(~,~).\end{equation} defines an inner product on the solvable subalgebra $\fraks= \a \oplus \n$. 
The  metric Lie algebra $(\fraks,\la ~,~\ra )$  
corresponds to a  solvmanifold  isometric to $(M,g)$.  

Now let  $X$ be an element of $\frakn$ and consider  $\ad_X|_{\fraks}$.   For $Y \in \frakn$ and $Z $ in $\fraks,$ 
\[ \la (\ad_X)^{*,\fraks}(Y),Z \ra = B_{\sigma}([X,Z],Y) =- B_{\sigma}(Z,[\sigma(X),Y]). \]
\begin{align*}\intertext{In particular, when $Z = A \in \fraka$, }\la (\ad_X)^{\ast,\fraks} Y,A\ra &= 
-\tfrac12\la A,[\sigma(X),Y]\ra  \notag \\
\intertext{and when $Z=V$ in $\frakn$, } \la (\ad_X)^{\ast,\fraks} Y,V\ra &= 
-\la V,[\sigma(X),Y]\ra. \notag 
\end{align*}
As a consequence,  for any $X$ and $Y$ in $\frakn$, 
\begin{align}\label{eqn: ad-star-a-n}
\ad_X^{*,\fraks}(Y) &= -\tfrac12 [\sigma(X), Y]_{\a} -[\sigma(X),Y]_{\n} \\
\ad_Y^{*,\fraks}(Y) &= -\tfrac12 [\sigma(Y), Y] \\
\ad_X^{*,\frakn}(Y) &=  -[\sigma(X),Y]_{\n} 
\end{align}
We have used the fact that $[\sigma(Y),Y]$ is in $\a$ for the second equality. 
The subalgebra $\n = \sum_{\lambda \in \Delta^+} \g_\lambda$ is the sum of the positive root spaces for the adjoint action of $\fraka$ on $\frakn.$ Let $\Lambda \subseteq \Delta^+$ be a set of simple roots. 
 It is straightforward  to verify that $(\fraks, \ip)$ is of strong Iwasawa type, and  $\Lambda$ has the properties in Definition \ref{defn: attached}.  
Hence, any proper
 subset $\Lambda'$ of $\Lambda$ defines an 
 attached solvable metric Lie algebra $(\fraks', \ip')$ as in Definition \ref{defn: attached}.

\subsection{Recovering Tamaru's Theorem 5.3}\label{subsection:  reobtaining Tamaru}
  We derive Tamaru's Theorem 5.3 as a special case of our Main Theorem.
 \begin{coro}\cite[Thm 5.3]{tamaru-11}\label{cor: hiroshi}
Let $(M,g)$ be a  symmetric space of noncompact type and let 
 the solvable  metric Lie algebra $(\fraks, \ip)$ be as described in the previous subsection so that it defines a solvmanifold isometric to $(M,g)$. Let $(\fraks',\ip')$ be an attached subalgebra.
 For any $X \in \s'$, the restriction of the Ricci endomorphism for $(\fraks,\ip)$ to $\fraks'$ coincides with the Ricci endomorphism for $(\fraks',\ip')$.  Because $(\fraks,\ip)$ is Einstein, $(\fraks',\ip')$ is Einstein with the same Einstein constant.
  \end{coro}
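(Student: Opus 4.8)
The plan is to reduce Corollary~\ref{cor: hiroshi} to the Main Theorem by verifying that the solvable metric Lie algebra attached to a symmetric space of noncompact type satisfies the Jacobi Star Condition. The chain of implications is clean: the ambient $(\fraks,\ip)$ is of strong Iwasawa type (noted at the end of Subsection~\ref{subsection: symmetric space}), the set $\Lambda$ of simple roots has the properties required in Definition~\ref{defn: attached}, and so $(\fraks',\ip')$ is a genuine attached subalgebra. If we establish the Jacobi Star Condition, then by the equivalence (iii)$\Leftrightarrow$(i) in the Main Theorem we get $\Ric^{\fraks}(X) = \Ric^{\fraks'}(X)$ for all $X \in \fraks'$, and the Einstein conclusion follows immediately from Corollary~\ref{cor: Einstein}, since symmetric spaces of noncompact type are Einstein.

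The heart of the argument is therefore: \emph{for a symmetric space, $\ad_X^{\ast,\fraks}$ is a derivation of $\fraks$ for every $X \in \fraks$.} This is where the remark after Definition~\ref{jacobi star} is invoked — a derivation property for all $\ad_X^{\ast}$ is a sufficient (slightly stronger) condition implying the Jacobi Star Condition. First I would record that for $X \in \frakn$, Equation~\eqref{eqn: ad-star-a-n} gives $\ad_X^{\ast,\fraks}(Y) = -\tfrac12[\sigma(X),Y]_{\fraka} - [\sigma(X),Y]_{\frakn}$; it suffices to observe that this is (up to the factor-of-2 bookkeeping coming from the scaling in Equation~\eqref{eqn: symmetric ip}) a restriction of $\ad_{\sigma(X)}$ followed by the projection $\fraks \to \fraks$ along $\frakk$, i.e. that $\ad_X^{\ast,\fraks} = \pi_{\fraks} \circ \ad_{\sigma(X)}|_{\fraks}$ where $\pi_{\fraks}$ is the projection relative to $\frakg = \frakk \oplus \fraks$. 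Since $\sigma$ is a Lie algebra automorphism, $\ad_{\sigma(X)}$ is a derivation of $\frakg$; the task is to check that composing with $\pi_{\fraks}$ preserves the derivation property on $\fraks$. This works because $\frakk$ is a subalgebra, $[\frakk,\fraks] \subseteq \fraks$, and more precisely the grading of $\frakg$ by $\fraka$-eigenvalues is respected: for $Y,Z \in \fraks$ the bracket $[\ad_{\sigma(X)}Y, Z] + [Y, \ad_{\sigma(X)}Z]$ lies in $\frakg$, and one checks term by term (splitting the $\frakk$ and $\fraks$ components using the root-space decomposition $\frakg = \fraka \oplus \sum_{\lambda\in\Delta} \frakg_\lambda$ with $\frakk$-components paired across $\pm\lambda$) that projecting to $\fraks$ commutes with the Leibniz expansion. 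The factor $\tfrac12$ on the $\fraka$-component in Equation~\eqref{eqn: ad-star-a-n}, which arises from the rescaled inner product on $\fraka$, is exactly compensated because $[\frakn,\frakn] \cap \fraka = \{0\}$ would be false — instead one observes $[\sigma(X),Y]_{\fraka}$ only appears when $X,Y$ are in opposite root spaces, and the Leibniz rule closes up consistently. I would then also handle $X \in \fraka$: there $\ad_X$ is already symmetric, so $\ad_X^{\ast,\fraks} = \ad_X$ is trivially a derivation, and by linearity $\ad_X^{\ast,\fraks}$ is a derivation for all $X \in \fraks$.

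With the derivation property in hand, the Jacobi Star Condition follows: one expands $\tfrac12\sum_j \epsilon_j [(\ad_{E_j^\perp})^{\ast,\frakn}, \ad_{E_j^\perp}](X)$ and uses that $\ad_{E_j^\perp}^{\ast}$ being a derivation lets one move it across the bracket $[E_j^\perp, \,\cdot\,]$, collapsing the commutator into $\ad$ of $\sum_j \epsilon_j (\ad_{E_j^\perp})^{\ast,\fraks} E_j^\perp$ acting on $X$ — this is essentially the same manipulation that appears in the proof of Theorem~\ref{thm: ricci-n-diff} combined with the identity $H - H' = -\sum_j \epsilon_j (\ad_{E_j^\perp})^{\ast,\fraks}E_j^\perp$. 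The only subtlety to watch is the mismatch between $(\ad_{E_j^\perp})^{\ast,\frakn}$ (adjoint in $\frakn$) on the left and $(\ad_{E_j^\perp})^{\ast,\fraks}$ (adjoint in $\fraks$) on the right; since $X \in \frakn'$ and $E_j^\perp \in \frakn_0$, Proposition~\ref{prop: alg properties}(iv) guarantees $(\ad_{E_j^\perp})^{\ast,\fraks}$ maps $\frakn'$ into $\frakn'$, so the $\fraka$-component never interferes with the part of the computation that lands in $\frakn'$, and the two adjoints agree where it matters. I expect the main obstacle to be precisely this careful bookkeeping of the projection $\pi_{\fraks}$ and the factor-of-$2$ rescaling in Equation~\eqref{eqn: symmetric ip} when verifying the derivation property — the Lie-theoretic content is standard (it is the statement that $\ad_X^{\ast}$ is a derivation in an Iwasawa-type decomposition of a semisimple Lie algebra), but making the constants line up in the pseudo-Riemannian/rescaled-metric formalism of this paper is the step most prone to error.
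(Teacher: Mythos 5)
Your top-level strategy is the same as the paper's: note $(\fraks,\ip)$ is of strong Iwasawa type, verify the Jacobi Star Condition, and invoke the Main Theorem together with Corollary \ref{cor: Einstein}. The gap is in the verification. Your central lemma --- that $\ad_X^{\ast,\fraks}$ is a derivation of $\fraks$ for every $X\in\fraks$ --- is false, and in fact fails for \emph{every} algebra of strong Iwasawa type, so no amount of bookkeeping will rescue it. Indeed, let $X$ be a unit vector in a root space $\frakn_\alpha$ and set $D=\ad_X^{\ast,\fraks}$. Then $DA=0$ for all $A\in\fraka$ (since $\la DA,W\ra=\la A,[X,W]\ra=0$ because $[X,\fraks]\subseteq\frakn\perp\fraka$), while $DX=-\epsilon H_\alpha\in\fraka$ is nonzero. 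Applying the Leibniz rule to $[A,X]=\alpha(A)X$ would give $\alpha(A)DX=[DA,X]+[A,DX]=[A,-\epsilon H_\alpha]=0$, forcing $H_\alpha=0$, a contradiction; concretely, already for the hyperbolic plane with $[A,N]=N$ one has $D[A,N]=\ad_N^{\ast}N=-cA\neq 0=[DA,N]+[A,DN]$ with $c>0$. Your identification $\ad_X^{\ast,\fraks}=\pi_{\fraks}\circ\ad_{\sigma(X)}|_{\fraks}$ is also off (the sign, and the genuine $\tfrac12$ on the $\fraka$-component coming from Equation \eqref{eqn: symmetric ip}), and $\pi_{\fraks}\circ\ad_{\sigma(X)}|_{\fraks}$ is itself not a derivation of $\fraks$: the $\frakk$-components you discard before the second bracket do contribute back into $\frakn$, e.g.\ when both arguments lie in the same root space, so your proposed ``projection commutes with the Leibniz expansion'' check fails exactly there. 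The remark after Definition \ref{jacobi star} cannot be taken literally with either adjoint in this setting.

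There is a second, independent problem: even granting your lemma, the conclusion misses the Jacobi Star Condition by a factor of $2$. If $(\ad_Y)^{\ast}$ were a derivation, then $[(\ad_Y)^{\ast},\ad_Y](X)=\ad_{(\ad_Y)^{\ast}Y}(X)$, so the left side of Equation \eqref{eq:jacobi-star} would become $\tfrac12\,\ad_{\sum\epsilon_j(\ad_{E_j^\perp})^{\ast,\fraks}E_j^\perp}(X)$, i.e.\ half the right side (and with the $\frakn$-adjoint it would be $0$, since $\pi_{\frakn}\bigl((\ad_Y)^{\ast,\fraks}Y\bigr)=0$). What actually makes the constants work is the computation the paper does: for $Y\in\frakn_0$ and $X\in\frakn'$ one has $\ad_Y^{\ast,\frakn}=-\pi_{\frakn}\circ\ad_{\sigma(Y)}$, the projections are harmless because every root $\mu-\lambda$ that occurs satisfies $(\mu-\lambda)(Z)=\mu(Z)>0$ and hence is positive (this restriction is essential: for arbitrary $X,Y\in\frakn$, say $X=Y$ in one root space, the $\frakn$-projection genuinely changes the commutator), and then the Jacobi identity in the semisimple algebra $\frakg$ gives $[(\ad_Y)^{\ast,\frakn},\ad_Y](X)=-[[\sigma(Y),Y],X]$ with $[\sigma(Y),Y]=-2(\ad_Y)^{\ast,\fraks}Y$; the factor $2$ produced by the rescaling $2B_\sigma$ on $\fraka$ is precisely what cancels the $\tfrac12$ in Equation \eqref{eq:jacobi-star}. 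In short, the derivation you should use is $\ad_{\sigma(Y)}$ acting on $\frakg$, not $\ad_Y^{\ast}$ acting on $\fraks$, and the factor of two you flagged as a bookkeeping risk is the substance of the argument rather than a detail.
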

  \begin{proof} 
We show that the Jacobi Star Condition holds.   Let $X$ and $Y$ be in $\frakn$.  Then Equation \eqref{eqn: ad-star-a-n}  gives
\begin{alignat*}{2} 
\tfrac12 ((\ad_Y)^{\ast,\frakn} \ad_Y - \ad_Y (\ad_Y)^{\ast,\frakn})(X) &= 
 - \tfrac12 ( \ad_{\sigma(Y)}  \circ \ad_Y - \ad_Y \circ  \ad_{\sigma(Y)}) (X) \\ &= -\tfrac12 \bigl( [\sigma(Y), [Y,X]] - [Y, [\sigma(Y), X]]\bigr).
 \end{alignat*}
 Rewriting the right side
 using the Jacobi Identity and also using Equation 
 \eqref{eqn: ad-star-a-n},  we get  
 \[  \tfrac12 ((\ad_Y)^{\ast,\frakn} \ad_Y - \ad_Y (\ad_Y)^{\ast,\frakn})(X)=-\tfrac12
 [[\sigma(Y),Y], X] = 
\ad_{((\ad_Y)^{\ast,\fraks} Y)} (X). \] 
Summing 
 over appropriate basis vectors in the place of $Y$ yields
 the Jacobi Star Condition. By the Main Theorem, $\Ric^{\s}(X) = \Ric^{\s'}(X)$   for any $X \in \s'$.
  \end{proof}
  
\section{An example}\label{section: example}

To show that our generalization extends beyond symmetric spaces,
 we present an example of a solvable metric Lie algebra of strong Iwasawa type that meets the hypotheses of the Main Theorem. This example is obtained from the affine untwisted Kac-Moody
algebra $\frakg$ defined by the simple classical Lie algebra $\fraksl_3(\boldR).$ The
infinite-dimensional Lie algebra $\frakg$ has a triangular decomposition
$\frakg = \frakn_- + \frakh + \frakn_+.$ We have taken the upper
triangular part $\frakn_+$ and truncated it to obtain a
finite-dimensional nilpotent Lie algebra.  We extend by a
codimension-one subalgebra of the Cartan subalgebra $\frakh$ of $\frakg$ to define a finite-dimensional  solvable Lie algebra $\fraks$.  We endow it with a natural inner product. 

We will  first present a standalone definition of the  solvable metric Lie
algebra $(\fraks,\ip)$ obtained from $\frakg.$   Having defined
$(\fraks,\ip)$, we will then 
define the attached subalgebra $\fraks'$ defined by  a
set of roots $\Lambda'$.   We will next 
compute Ricci curvatures and verify that
the Jacobi Star Condition holds.  Last, we will show that $(\fraks,\ip)$  is
Einstein, but not a symmetric space, and that the attached metric subalgebra
is not totally geodesic. 
\subsection{Definitions}
In what follows, $E_{ij}$ will denote the matrix in $\fraksl_3(\boldR)$ with a one in the $(i,j)$ entry and zeros elsewhere, and  $H_{ij}$ will denote the diagonal 
matrix $E_{ii} - E_{jj}$.
\begin{defn}
Let $\frakn = \myspan \{ 1 \otimes E_{12}, 1 \otimes E_{13}, 1 \otimes E_{23}\} \bigoplus t \otimes \fraksl_3(\boldR)$, with Lie 
brackets be defined by 
\begin{equation}\label{eqn: n bracket} [t^i \otimes X, t^j \otimes Y]^{\frakn} =  t^{i+j} \otimes  [X,Y],\end{equation}
where $[X,Y]$ denotes the commutator of matrices $X$ and $Y$. 
Derivations $D_0, D_1, D_2$ of $\frakn$ are defined as follows: 
  \begin{align*} D_0(t^i \otimes X) &=  i (t^i \otimes  X ), \\
  D_1(t^i \otimes X) &= t^i \otimes [H_{12}, X ],\, \text{and}  \\
   D_2(t^i \otimes X) &= t^i \otimes [H_{23}, X ], \end{align*}
  for $X \in \fraksl_3(\boldR)$ and $t \in \{0,1\}$.
These derivations define a solvable extension $\fraks = \frakh + \frakn$ of $\frakn$, where  $\frakh  = \myspan \{  \bfD,~\bfH_1,~\bfH_2 \} \cong \boldR^3 $
is the vector space with basis $\{\bfD,~\bfH_1,~\bfH_2\}$.
  The Lie bracket in $\fraks$ is determined by 
\begin{itemize}
\item $[\bfD,X] = D_0(X),~ [\bfH_1, X] = D_1(X),~$ and $[\bfH_2, X]  =D_2(X)$ if $X$ is in $\frakn$, 
\item  $[X,Y] = [X,Y]^\frakn$ if $X , Y \in \frakn$, 
\item $[X,Y]=0$ if $X$ and $Y$ are in $\frakh$. 
\end{itemize}
If we write  $\bfH_1 = 1 \otimes H_{12}$ and $\bfH_2 = 1\otimes H_{23}$, then Equation \eqref{eqn: n bracket} extends to $\bfH_1$ and $\bfH_2.$
(We use bold font for elements of $\frakh$ to distinguish them from elements of $\fraksl_3(\boldR).$)

Define a scalar product on $\fraks$ by 
making $\frakh$ and $\frakn$ orthogonal;  defining the scalar  product on $\frakn$ by 
\begin{equation}\label{eqn: ad s} \la t^i \otimes X, t^j \otimes Y \ra = \delta_{ij}\tr( X^TY)\quad \text{for $X, Y \in \fraksl_3(\boldR)$ and $t \in \{0,1\}$};\end{equation}
and defining the scalar  product on $\frakh$ by
   letting  
\begin{equation}\label{eqn: mtx def}\la a_0 \bfD + a_1 \bfH_1 + a_2 \bfH_2, b_0 \bfD + b_1 \bfH_1 + b_2 \bfH_2 \ra = \begin{bmatrix} a_0 & a_1 & a_2\end{bmatrix}
\begin{bmatrix} \smallfrac{16}{9} & 0 & 0 \\ 0 & 4 & -2 \\ 0 & -2 & 4 \end{bmatrix}
\begin{bmatrix} b_0 \\ b_1 \\ b_2\end{bmatrix}.
\end{equation}
\end{defn}
It is straightforward to confirm that $(\fraks,\ip)$ is a solvable metric Lie algebra  of strong Iwasawa type.

The set of nontrivial roots is 
\[ \Delta = \{ \alpha_1, \alpha_2, \alpha_1+\alpha_2, \delta, \delta \pm \alpha_1, \delta \pm \alpha_2, \delta \pm (\alpha_1+\alpha_2)\},\] where 
\begin{align*} 
 \delta(a_0 \bfD + a_1 \bfH_1 + a_2 \bfH_2) &= a_0,  \\
 \alpha_1(a_0 \bfD + a_1 \bfH_1 + a_2 \bfH_2) &=  2a_1 - a_2, \, \text{and}\\   
 \alpha_2(a_0 \bfD + a_1 \bfH_1 + a_2 \bfH_2) &=  - a_1 + 2a_2. 
\end{align*}
The root space decomposition of $\frakn$ determined by $\frakh$ is 
$ \frakn =  \bigoplus_{\alpha \in \Delta} \frakg_\alpha.$
The  root spaces 
\[\frakg_{\alpha_1} = \myspan \{ 1 \otimes E_{12}\},\quad \frakg_{\alpha_2} = \myspan \{ 1 \otimes E_{23}\}, \quad \frakg_{\alpha_1+\alpha_2} = \myspan \{1 \otimes E_{13}\}. \] 
\[  \frakg_{\delta + \alpha_1} = \myspan \{t \otimes E_{12}\}, \quad \frakg_{\delta + \alpha_2} = \myspan \{t \otimes E_{23}\}, \quad \frakg_{\delta +\alpha_1 + \alpha_2} = \myspan \{t \otimes E_{13}\},\]
\[  \frakg_{\delta -\alpha_1}= \myspan \{t \otimes E_{21}\}, \quad \frakg_{\delta-\alpha_2} = \myspan \{t \otimes E_{32}\}, \quad \frakg_{\delta-\alpha_1-\alpha_2} = \myspan \{t \otimes E_{31}\},\]
are one-dimensional and there is a single two-dimensional  root space 
  $\frakg_{\delta}= \myspan \{t \otimes
  H_{12}, t \otimes H_{23} \}.$

It will be convenient to let  
$\alpha_0 = \delta - \alpha_1 -\alpha_2$.
  The root vectors  for  the special roots $\alpha_0, \alpha_1, \alpha_2$ and $\delta$ are  $\bfH_{\alpha_0} = \tfrac{9}{16}\bfD - \tfrac 12\bfH_1 - \tfrac 12\bfH_2$,  
  $\bfH_{\alpha_1}=\tfrac12 \bfH_1$, $\bfH_{\alpha_2}=\tfrac12 \bfH_2$, and $\bfH_\delta = \tfrac{9}{16}\bfD$.
 The root vectors for all other roots can be obtained from sums of these.

An orthonormal basis for $\frakh$ is
\[  \calC_1 =  \left\{\smallfrac{3}{4} \bfD, \smallfrac{1}{2} \bfH_1, \smallfrac{1}{2\sqrt{3}}(\bfH_1 + 2\bfH_2)
   \right\}\] and  an orthonormal basis for $\n$ is 
\begin{multline*}
\calC_2 = \{ 1\otimes E_{12}, 1\otimes E_{23}, 1\otimes E_{13}\} \, \cup 
          \{  t\otimes E_{31}, t \otimes E_{21}, t \otimes E_{32}, \\t \otimes \tfrac{1}{\sqrt 2} H_{12},\, t \otimes \tfrac{1}{\sqrt{6}}(H_{12}+2H_{23}),  t \otimes E_{12}, t \otimes E_{23}, t \otimes E_{13} \}. 
\end{multline*}  
Hence, $\calC=\calC_1 \cup \calC_2$ is an orthonormal basis for $\fraks.$
It may be shown that the mean curvature vector for $\fraks$ is
$\bfH=\tfrac 92{\bfD} + \bfH_1 + \bfH_2$
and the endomorphism $\ad_{\bfH}$ is diagonal when represented with respect to the ordered basis $\calC_2$, 
\[ [\ad_{\bfH}|_\frakn]_{\calC_2} = \diag\left(1, 1, 2, \smallfrac52, \smallfrac72, \smallfrac72, \frac92, \frac92, \smallfrac{11}{2}, \smallfrac{11}{2}, \smallfrac{13}{2}\right). \]
Computing the Ricci endomorphism  using Equation \eqref{eqn: ricci-nilpotent}, we get
\begin{equation}\label{eqn: Ric n}
[\Ric^\n]_{\calC_2} = \diag\left(-\smallfrac 72, -\smallfrac 72,-\smallfrac 52,-2,-1,-1,0,0,1,1,2\right).
\end{equation}

We show that $(\fraks,\ip)$ is Einstein using Theorem \ref{thm: 3.9 conti-rossi}. Because
\[ [\Ric^\n -\ad_{\bfH}]_{\calC_2} = 
\diag\left(-\tfrac 92, -\tfrac 92,-\tfrac 92,-\tfrac 92, -\tfrac 92,-\tfrac 92,-\tfrac 92,-\tfrac 92  \right),\]
 $\Ric^\frakn = -\smallfrac{9}{2}\Id^\frakn + \ad_{\bfH}$. 
To  show  that 
 $\la \ad_A, \ad_{A'} \ra^{\End(\n)}_{\trace} = -\frac92 \la A, A' \ra$ for all $A, A' \in \frakh$, we compute both sides of this equality, obtaining that  with respect to the basis  $\{ \bfD, \bfH_1, \bfH_2 \}$ for $\frakh$, both sides of the equality are represented by the matrix  in Equation \eqref{eqn: mtx def}.
By Theorem \ref{thm: 3.9 conti-rossi}, the metric Lie algebra $(\fraks,\ip)$ is Einstein with Einstein constant $-\frac92$.  

Let  $\Lambda = \{\alpha_0, \alpha_1, \alpha_2\}$. 
 In order for a subset of $\Lambda$ to define an attached subalgebra of $(\fraks,\ip)$, the set $\Lambda$  must satisfy the 
hypotheses stated in first paragraph of Section \ref{subsection: definitions}.  This is easily verified.

The roots $\alpha_0, \alpha_1, \alpha_2$ in $\Lambda$ have corresponding root vectors $\bfH_{\alpha_0}, \bfH_{\alpha_1}, \bfH_{\alpha_2}$.
A short computation shows that reflections $s_0, s_1, s_2 : \frakh \to \frakh$ in these  are given by  
\begin{align} s_0(\bfH_{\alpha_0})&=-\bfH_{\alpha_0}, \quad &s_0(\bfH_{\alpha_1}) &=\bfH_{\alpha_1} +\tfrac{16}{25}\bfH_{\alpha_0}, \quad &s_0(\bfH_{\alpha_2}) &= \bfH_{\alpha_2} +\tfrac{16}{25}\bfH_{\alpha_0} \notag \\
s_1(\bfH_{\alpha_0})&=\bfH_{\alpha_0} + \bfH_{\alpha_1}, \quad &s_1(\bfH_{\alpha_1}) &=-\bfH_{\alpha_1}, \quad &s_1(\bfH_{\alpha_2}) &= \bfH_{\alpha_1} +\bfH_{\alpha_2} \notag \\
s_2(\bfH_{\alpha_0})&= \bfH_{\alpha_0} +\bfH_{\alpha_2}, \quad &s_2(\bfH_{\alpha_1}) &=\bfH_{\alpha_1} +\bfH_{\alpha_2}, \quad &s_2(\bfH_{\alpha_2}) &= -\bfH_{\alpha_2}. \notag 
\end{align}

Using that the set $\Lambda$ is a basis for $\frakh^*$, we define the  dual basis  $\calB = \{\bfB_0, \bfB_1, \bfB_2\}$ for $\frakh$,  
where  
\[ \bfB_0=\bfD, ~\bfB_1=\tfrac13 (2\bfH_1 + \bfH_2)   +\bfD, \, \text{and} \quad ~\bfB_2 =\tfrac13 (\bfH_1 + 2\bfH_2)+\bfD.\]
 With respect to this basis, our mean curvature vector is 
$\bfH= \tfrac52 \bfB_0 +\bfB_1 +\bfB_2.$

  \subsection{The definition  of the attached subalgebra}
We will use the subset 
  $\Lambda' = \{\alpha_2\}$ of $\Lambda$ to define an attached subalgebra. 
  Applying Definition \ref{defn: attached}, the attached subalgebra $\s'=\a' \oplus \n'$ defined by $\Lambda'$ has 
\begin{align*}\frakh' &= \myspan\{\bfB_0, \bfB_1\}, \quad \text{and} \\
  \n' &=  \myspan\{1\otimes E_{12}, 1\otimes E_{13}\} \\
                  &~\oplus \myspan\{t\otimes E_{31}, t\otimes E_{21}, t\otimes E_{32}, t\otimes H_{12}, t\otimes H_{23}, t\otimes E_{12}, t\otimes E_{23}, t\otimes E_{13}\}.\end{align*}
             An orthonormal basis for $\frakn'$ with respect to the restricted metric is 
\begin{align}
\label{ordered-basis-n-prime}
\calC_2' = \{1\otimes E_{12},  &1\otimes E_{13}, 
t\otimes E_{31}, t\otimes E_{21}, t\otimes E_{32}, \\
&\smallfrac{1}{\sqrt 2}t\otimes H_{12},\smallfrac{1}{\sqrt 6}t\otimes(H_{12}+2 H_{23}), t\otimes E_{12}, t\otimes E_{23}, t\otimes E_{13}\}. \notag
\end{align}
The vector $\bfZ$ is $\bfZ= \bfB_{0} + \bfB_{1}$, and $\alpha(\bfZ)>0$ for all roots in $\Delta$ except for $\alpha_2.$
\subsection{Properties of the attached subalgebra}
  To meet the definition of attached, we need to show that the  reflection  $\check{s}_{\alpha_2}$    permutes the 
  roots $\alpha$ for which 
$\alpha(\bfZ)>0$, and  that 
$\dim(\n_\alpha) = \dim(\n_{\check{s}_{\alpha_j}(\alpha)})$ for the roots in 
\[ \Delta \setminus \{\alpha_2 \} =  \{ \alpha_1,  \alpha_1+\alpha_2, \delta, \delta \pm \alpha_1, \delta \pm \alpha_2, \delta \pm (\alpha_1+\alpha_2)\}. \]
It is easy to confirm that $\check{s}_{\alpha_2}$    permutes the elements of the set $ \Delta \setminus \{\alpha_2 \} $ using
\[ \check{s}_{\alpha_2}(c_0 \delta + c_1 \alpha_1 + c_2 \alpha_2)= c_0 \delta +c_1 (\alpha_1 + \alpha_2)  -c_2 \alpha_2.\]
All of the root spaces are one-dimensional except for $\frakn_\delta$, and $\delta$  is fixed by  $\check{s}_{\alpha_2}$, so 
$\dim(\n_\alpha) = \dim(\n_{\check{s}_{\alpha_j}(\alpha)})$ for the roots under consideration.

The mean curvature vector for  $\n'$ in $\s'$ is 
$\bfH' = \tfrac92\bfD + \bfH_1 + \tfrac12 \bfH_2.$

The restriction of 
$\ad_{\bfH'}$ to $\n'$ is given by
\begin{equation}\label{eqn: adH'} 
[\ad_{\bfH'}|_{\frakn'}]_{\calC_2'} = 
\diag\left(\smallfrac32, \smallfrac32, 3, 3, \smallfrac92, \smallfrac92, \smallfrac92, 6, \smallfrac92, 6\right),\end{equation}
while the restriction of 
$\ad_{\bfH}$ to $\n'$ is
\begin{equation}\label{eqn: adH} 
[\ad_{\bfH}|_{\frakn'}]_{\calC'_2} = \diag\left(1, 2, \smallfrac52, \smallfrac72, \smallfrac72, \smallfrac92, \smallfrac92, \smallfrac{11}{2}, \smallfrac{11}{2}, \smallfrac{13}{2}\right).  \end{equation}
The  Ricci endomorphism for  $\n'$ is
\begin{equation}\label{eqn: Ric nprime}
[\Ric^{\n'}]_{\calC'_2} = \diag\left(-3,-3,-\smallfrac 32,-\smallfrac 32, 0,0,0,\frac 32, 0, \smallfrac 32\right).
\end{equation}

The hypotheses of the Main Theorem are satisfied. 
The Main Theorem says the following are equivalent:
   \begin{enumerate}
       \item 
 ${\displaystyle \Ric^{\s}(X) = \Ric^{\s'}(X)}$   for any $X \in \s'$.
\item $\Ric^\n (X) - \Ric^{\n'} (X) =  [H - H', X]$ for any $X \in \n'$.
  \item The Jacobi Star Condition  holds. 
\end{enumerate}
 Part (i) is verified by
\[ [\Ric^\fraks|_{\fraks'}]_{\calC'} = 
\diag(-\smallfrac92,-\smallfrac92,-\smallfrac92,-\smallfrac92,-\smallfrac92,-\smallfrac92,-\smallfrac92,-\smallfrac92,-\smallfrac92,-\smallfrac92) = [\Ric^{\fraks'}]_{\calC'} .\]
Equations \eqref{eqn: adH'} and \eqref{eqn: adH}  tell us that
the difference $\ad_{H -H'}|_{\n'}$ is given by
\begin{equation}\label{eqn: MCV-diff}[(\ad_{H -H'})|_{\n'}]_{\calC'_2} = \diag\left(-\smallfrac12, \smallfrac12, -\smallfrac12, \smallfrac12, -1, 0, 0, -\smallfrac12, 1, \smallfrac12\right).
\end{equation}
Taking the difference of Equations \eqref{eqn: Ric nprime} and \eqref{eqn: Ric n}, we get
\begin{equation}\label{eqn: conclusion}[\Ric^\frakn|_{\frakn'}]_{\calC_2'} - [\Ric^{\frakn'}]_{\calC_2} 
= \diag\left(-\smallfrac 12, \smallfrac 12, -\smallfrac 12, \smallfrac 12, -1, 0, 0, -\smallfrac 12, 1, \smallfrac 12 \right).
\end{equation}
Comparing Equations \eqref{eqn: conclusion} and \eqref{eqn: MCV-diff} shows that 
\[ 
[\Ric^\frakn|_{\frakn'}]_{\calC_2'} - [\Ric^{\frakn'}]_{\calC_2} 
= [\ad_{H -H'}|_{\n'}]_{\calC_2'} .\]
Thus Part (ii) holds.    
Once (i) or (ii) holds, of course (iii) must hold by the Main Theorem.

By Theorem \ref{thm: minimal}, the attached submanifold defined by
$(\fraks', \ip')$ is minimal in the solvmanifold corresponding to 
$(\fraks, \ip)$.
To verify that the submanifold is not totally geodesic, we show the sets 
$\Lambda'=\{\alpha_2\}$ and $\Lambda \setminus \Lambda' = \{\alpha_0,
\alpha_1\}$ are not orthogonal.  Because $\la \alpha_1,\alpha_2\ra=-2$, they
are not, so by Proposition \ref{prop: tot-geodesic}, the submanifold is not
totally geodesic.

In this example, the nilpotent Lie algebra $\frakn$ has dimension $11$
and has five steps.   There are ten nontrivial root spaces for the action of  $\frakh$ on $\frakn$, and ten of these  
root  spaces are one-dimensional and one is two-dimensional.

Suppose that $\frakn$ is the nilpotent part of the Iwasawa decomposition 
of a semisimple Lie group $\frakg$ of noncompact type. Because the dimension of the center of $\frakn$ is one,  
$\frakg$  must be simple. 
The  subspace $\myspan \{ 1 \otimes E_{12}, 1 \otimes E_{23}, 
t \otimes E_{31}  \}$ generates $\frakn$ and 
is the sum of two $\ad_{\bfH}$ eigenspaces, so any 
extension of $\myspan \{ \bfH \}$  to a Cartan-subalgebra in $\frakg$ has dimension two or three. Hence $\frakg$ has rank $1$, $2$ or $3.$ 
It can't be rank one, because $\frakn$  is not two step.  If $\frakg$
were rank two, the corresponding symmetric space would be 
$13$-dimensional. If $\frakg$ were rank three then the 
corresponding symmetric space would be $14$-dimensional.  
Comparing with \cite{helgason-78}, Tables V and VI,
or Besse \cite{besse-87}, Tables 7.104 and 7.105, we deduce that 
there are  symmetric spaces of noncompact type with 
dimension $13$ other than hyperbolic space, and there no rank three
symmetric spaces of dimension 14. Thus, $\frakn$ is not the nilpotent Lie algebra in the Iwasawa decomposition of any real  semisimple Lie algebra of noncompact type.   

In conclusion, we have shown that $(\fraks, \ip)$ is an Einstein metric Lie
algebra, and that $(\fraks', \ip')$ is a metric
subalgebra  whose 
Ricci endomorphism is the restriction of the Ricci endomorphism for
$(\fraks, \ip)$.  Hence the metric 
subalgebra is also Einstein with  the same
Einstein constant. Furthermore, this metric subalgebra is minimal but 
not totally geodesic.  Finally, we  showed that the associated $\frakn$ 
is not the nilpotent Lie algebra from a symmetric space of noncompact type.

\bibliographystyle{amsalpha}
\bibliography{bibfile}
 
 \end{document}